\documentclass[a4paper]{article}
\usepackage[utf8]{inputenc}
\usepackage{graphicx}
\graphicspath{{./images/}}
\usepackage{fullpage}
\usepackage[affil-it]{authblk}
\usepackage{longtable}
\usepackage{amsfonts,amssymb,amsmath,amscd,amsthm,amsbsy,tikz}

\usepackage{array}
\usepackage{multirow}
\usepackage{multicol}
\usepackage{booktabs}
\usepackage[font=small,labelfont=bf]{caption}
\usepackage{enumerate}
\usepackage{mathtools}
\usepackage{float}

\usepackage[normalem]{ulem} %

\usepackage{setspace}
\numberwithin{equation}{section}

\usepackage{hyperref}

\newcommand{\R}{\mathbb{R}}
\newcommand{\N}{\mathbb{N}}

\newcommand{\id}{\mbox{id}}

\newcommand{\e}{\varepsilon}

\newcommand{\cH}{\mathcal{H}}
\newcommand{\rD}{\mathrm{d}}
\newcommand{\fG}{\mathfrak{g}}
\newcommand{\fM}{\mathfrak{m}}
\newcommand{\fH}{\mathfrak{h}}
\newcommand{\fX}{\mathfrak{X}}
\newcommand{\sg}{\mathsf{g}}
\newcommand{\ip}[2]{\langle #1,#2\rangle}

\DeclareMathOperator{\spn}{span}
\DeclareMathOperator{\Ad}{Ad}

\DeclareMathOperator{\SO}{SO}

\DeclareMathOperator{\vol}{vol}

\newtheorem{thm}{Theorem}[section]
\newtheorem{cor}[thm]{Corollary}
\newtheorem{conj}[thm]{Conjecture}
\newtheorem{prop}[thm]{Proposition}
\newtheorem{lem}[thm]{Lemma}

\theoremstyle{definition}
\newtheorem{defin}[thm]{Definition}
\newtheorem*{ack}{Acknowledgements}

\theoremstyle{remark}
\newtheorem*{rmk}{Remark}

\title{Incompressible Euler fluids on compact cohomogeneity one manifolds}
\author[1]{Timothy Buttsworth}
\author[2]{Max Orchard}

\affil[1]{\small School of Mathematics and Statistics\\
The University of New South Wales\\
Kensington, Sydney\\ NSW 2052, Australia\\ 
\emph{E-mail address}: {\tt t.buttsworth@unsw.edu.au}}
\vspace{0.5cm}
\affil[2]{\small School of Mathematics and Physics\\
The University of Queensland\\
St Lucia, Brisbane\\
QLD 4072, Australia\\ 
\emph{E-mail address}: {\tt m.orchard@uq.edu.au}}

\date{}

\begin{document}

\maketitle

\abstract{
Let $(M,\mathsf{g})$ be a connected and compact Riemannian manifold admitting an isometric action by a compact Lie group $G$ whose principal orbits have codimension one. We show that any $G$-invariant, smooth, and divergence-free vector field $u_0$ on $(M,\mathsf{g})$ initiates a $G$-invariant time-varying velocity-pressure pair $(u,p)$ which has time interval $\mathbb{R}$, is smooth, and solves the incompressible Euler fluid equations.}

\section{Introduction}
Let $(M,\mathsf{g})$ be a compact Riemannian manifold with (possibly empty) boundary. The incompressible Euler fluid equations for velocity field $u:[0,T)\to \fX(M)$ and scalar pressure field $p:[0,T)\to C^{\infty}(M)$ are given by (cf.~\cite[Chapter 17]{TaylorPDE3})
\begin{align}\label{Euler}
\begin{split}
    0&=\frac{du}{dt}+\nabla_{u}u+\nabla p,\\
    0&=\nabla\cdot u,
    \end{split}
\end{align}
with the boundary condition 
\begin{align}\label{BCs0}
   (u(t))(x)\in T_{x}(\partial M)\ \ \text{for each} \ x\in\partial M \ \text{and} \ t\in [0,T).
\end{align}
Here, $\nabla\cdot u$ means the divergence of $u$ with respect to the Riemannian metric $\sg$, $\nabla p$ means the gradient of the scalar function $p$ (with respect to $\sg$), and $\nabla_{u}u$ is the Levi-Civita connection of $u$. 
Differential geometers are primarily interested in solutions $(u,p)$ to this equation because the vector field $u$ can be integrated to give a geodesic on the space of volume-preserving diffeomorphisms of $M$ equipped with the $L^2(M,\mathsf{g})$ metric (see, for instance, \cite{ArnoldKhesin,EbinMarsden70} for a thorough treatment of this topic).

A fundamental problem in the theory of non-linear PDEs is to determine whether a smooth solution of \eqref{Euler}--\eqref{BCs0} on the interval $[0,T)$ can always be extended to a smooth solution on $[0,\infty)$. Since the Euler fluid equations are time-reversible (with $t\mapsto -t$, $u\mapsto -u$ and $p\mapsto p$), this question of smooth extensions forward in time is equivalent to asking if a smooth solution can always be extended indefinitely both forwards \textit{and backwards} in time. 

An enormous amount of effort has been exerted in an attempt to answer this question in the special case that $M$ is an open submanifold of $\mathbb{R}^3$ equipped with the standard Euclidean metric. For example, Elgindi \cite{Elgindi21} has found that if one replaces the \textit{smoothness} requirement of $u$ with $C^{1,\alpha}$-regularity for some $\alpha>0$, there are  solutions in $\mathbb{R}^3$ which cannot be extended indefinitely. Chen and Hou \cite{Chen23} have produced a lengthy computationally-assisted proof that there is a smooth solution on the periodic cylinder $B_0(1)\times \mathbb{R}$ which cannot be extended indefinitely. Related work on singularity formation in fluid flow can be found in \cite{Chen21,Elgindi23,Deepmind25,Wangetal23}.

Tao has recently made several contributions to this problem on Riemannian manifolds without boundary in \cite{Tao17,Tao19}, where he stated the following conjecture:
\begin{conj}\label{Taoconjecture}
    There exists a closed (compact and without boundary) Riemannian manifold $(M,\mathsf{g})$ and a smooth solution $(u,p)$ to \eqref{Euler} that cannot be extended indefinitely forwards in time.
\end{conj}
The two aforementioned singularity results do not settle this conjecture because either the solution is not smooth, or the manifold has boundary.

The purpose of this paper is to show that Conjecture \ref{Taoconjecture} is false if one restricts the scope to Riemannian manifolds $(M,\mathsf{g})$ and solutions $(u,p)$ that are all invariant under the action of a Lie group $G$ whose principal orbits in $M$ have codimension one. We obtain such a strong global existence result in this case because the divergence-free condition imposes powerful restrictions on the vector field $u$ when combined with cohomogeneity one symmetries. 
\begin{thm}\label{maintheorem}
    Suppose the compact Lie group $G$ acts with cohomogeneity one and isometrically on the closed and connected Riemannian manifold $(M,\mathsf{g})$. For any smooth, divergence-free and $G$-invariant vector field $u_0\in \fX(M)$, there exists a smooth $u:\mathbb{R}\to \fX(M)$ and $p:\mathbb{R}\to C^{\infty}(M)$ satisfying \eqref{Euler} and the initial condition $u(0)=u_0$. 
\end{thm}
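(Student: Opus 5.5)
The strategy is to show that the Euler equation, restricted to $G$-invariant divergence-free fields, reduces to an \emph{explicit} evolution with no genuine nonlinearity in the transverse direction. The solution can then be written down essentially in closed form on $\mathbb{R}$.

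\textbf{Step 1: structure of invariant fields.} On the principal part $M_0\cong (a,b)\times G/H$, write $\sg = dr^2+g_r$, where $g_r$ is a family of $G$-invariant metrics on $G/H$. (If $M$ has principal orbits only, $r$ ranges over a circle; otherwise there are one or two singular orbits at the endpoints.) A $G$-invariant vector field has the form
\[
u=f(r)\,\partial_r+X(r),
\]
where $X(r)$ is a $G$-invariant vector field on $G/H$. Every $G$-invariant vector field on a homogeneous space preserves the invariant volume form, so $\operatorname{div}_{g_r}X(r)=0$. Hence
\[
\nabla\cdot u=\frac{1}{\mu(r)}\,(\mu f)'(r), \qquad \mu(r)=\vol(G/H,g_r).
\]
Divergence-free therefore forces $\mu f=c$ for a constant $c$. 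At a singular orbit $\mu\to 0$, so smoothness forces $c=0$, i.e.\ $f\equiv 0$. In the case without singular orbits, integrating $\nabla\cdot u=0$ shows $\mu f=c$ is possibly nonzero. I expect this to be the key structural lemma, proved in that order.

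\textbf{Step 2: the case $f\equiv 0$.} Here $u$ is tangent to the orbits, and each orbit $G\cdot x$ is homogeneous. I would seek solutions $u(t)=X(t,r)$ with $\partial_t X=-P(\nabla_X X)$, where $P$ denotes the orbit-tangential projection. The normal component of $\nabla_X X$ is a $G$-invariant function of $r$ alone, so it can be absorbed into $\nabla p$ with $p=p(r)$. The tangential component is the Euler--Arnold term on the homogeneous space $G/H$ with metric $g_r$, restricted to invariant fields.

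The main obstacle is showing that this tangential term vanishes, or at least that the finite-dimensional ODE it defines is globally solvable. My plan is to show that for $G$-invariant $X$ on $(G/H,g_r)$, the field $\nabla_X X$ equals $-\tfrac12\nabla|X|^2$ along the orbit. The identity $g(\nabla_X X,Y)=-g(X,[X,Y])-\ldots$ with $Y$ a Killing field generated by $G$ gives this. Since $|X|^2$ is constant on the orbit, the tangential part is zero. So on each orbit $\partial_t X=0$, and $u(t)=u_0$ is a steady solution with pressure determined by the ODE
\[
p'(r)=-\sg\bigl(\nabla_{u_0}u_0,\partial_r\bigr).
\]
Smoothness of $p$ at the singular orbits follows because $p$ is the solution of the elliptic problem $\Delta p=-\nabla\cdot(\nabla_u u)$, which is $G$-invariant and smooth.

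\textbf{Step 3: principal orbits only.} Here $M$ fibres over $S^1$ and $f=c/\mu$ with $c$ fixed in time. The same Killing-field argument gives a linear transport equation for $X$:
\[
\partial_t X+f\,\bigl(\nabla_{\partial_r}X\bigr)^{\top}+(\text{terms linear in }X\text{ with coefficients depending on }f,r)=0.
\]
In a $G$-invariant frame this becomes a linear first-order hyperbolic system in $(t,r)$ on the circle with smooth coefficients and constant characteristic speed $f(r)$. Such a system has smooth global solutions for all $t\in\mathbb{R}$ by the method of characteristics, since the characteristics $\dot r=f(r)$ exist globally. The pressure is again recovered from the radial component, plus the ODE consistency condition, which fixes $\partial_t f=0$ consistently.
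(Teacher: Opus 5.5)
\textbf{There is a genuine gap.} Step 2 rests on a false identity. For $G$-invariant $X$ and an action field $Y=Z^*$ you do have $[Y,X]=0$ and $\sg(\nabla_XY,X)=0$. These only give $\sg(\nabla_XX,Y)=X\bigl(\sg(X,Y)\bigr)$, and $\sg(X,Y)$ is not constant on the orbit because $Y$ is not $G$-invariant. What the Killing-field computation really yields is $\sg(\nabla^r_XX,X)=\tfrac12 X\bigl(\sg(X,X)\bigr)=0$, i.e.\ pointwise conservation of $|X|^2$, not $\nabla^r_XX=0$.

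A counterexample: let $G=\SO(3)$ act on itself by left multiplication ($H$ trivial) with left-invariant metric $\ip{a}{b}=a\cdot Ib$, where $I$ is diagonal with distinct entries. The invariant fields are the left-invariant ones, $\nabla_XX=-\mathrm{ad}_X^*X$, and \eqref{Eulerhom} becomes Euler's rigid-body system $I\dot\omega=(I\omega)\times\omega$. This is not stationary unless $\omega$ is an eigenvector of $I$. On $M=S^1\times\SO(3)$ with the product metric, a vertical $r$-independent $u_0$ therefore does not stay fixed. So $u(t)=u_0$ fails in Step 2, and in Step 3 the equation for $v$ is $\partial_tv=hS_rv-h\nabla_{\partial_r}v-\nabla^r_vv$, which keeps a genuinely quadratic term; it is not a linear hyperbolic system.

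Step 3 has a second error: $c$ is not constant in time. Integrate $\partial_th=-hh'-\sg_r(S_rv,v)-p'$ over a period, using periodicity of $p$ and $h_0$. This gives $c'(t)\int_0^1h_0\,dr=-\int_0^1\sg_r(S_rv,v)\,dr$, which is generally nonzero. For instance, on $S^1\times T^2$ with $\rD r^2+a(r)^2\rD\theta_1^2+\rD\theta_2^2$ and $v=v_1(r)\partial_{\theta_1}$, the right-hand side is $\int_0^1aa'v_1^2\,dr$. So the horizontal and vertical parts are coupled. Finally, Step 1 omits exceptional endpoint orbits, where $\mu\not\to0$. There you need $f=0$ at the endpoint, coming from the nontrivial action of $K$ on the one-dimensional slice.

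\textbf{What is missing.} Once the orbitwise dynamics is recognised as a nonlinear Euler--Arnold ODE, no closed-form solution is available. The real content of the theorem is then an a priori $C^1$ bound to feed into the blow-up criterion of Theorem \ref{Eulerwellposed}, and this is absent from your proposal. The paper obtains it as follows.

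For $M/G=[0,1]$, $h\equiv0$, and $\sg(\nabla^r_vv,v)=0$ gives $\|u(t,x)\|=\|u_0(x)\|$.
\begin{itemize}
\item Near principal orbits, write $dv_k/dt=\sum d^k_{ij}v_iv_j$ in an invariant frame and expand about $r_0$.
\item Near a nonprincipal orbit, expand $u$ in $K$-equivariant homogeneous polynomials. Bound the degree-one part with the rescaled inner product $\tilde{\sg}$, on which the quadratic term again cancels. Bound higher degrees inductively by linear Gr\"onwall estimates, then shrink the neighbourhood to control the remainder.
\end{itemize}

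For $M/G=S^1$:
\begin{itemize}
\item $c(t)$ is bounded, not fixed, by conservation of $L^2$ energy.
\item $\sg(v,v)$ is bounded by the maximum principle, since the quadratic term drops out of its evolution.
\item A second maximum-principle argument for $\sum_k(\partial_rv_k)^2$ gives the $C^1$ bound.
\end{itemize}
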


It is unclear if Theorem \ref{maintheorem} still holds if we relax the strength of the symmetry assumption. Indeed, the most popular symmetry assumption used in the study of Euler fluids in $\mathbb{R}^3$ is the \textit{axisymmetric} assumption, i.e., the fluid is invariant under the usual action of $\SO(2)$ which rotates $\mathbb{R}^3$ around one of the axes. In this situation, almost all orbits have codimension two. In the aforementioned work on singularity formulation, the authors consider flows without \textit{swirl} (i.e., flows that are everywhere perpendicular to the orbits).

This paper is organised as follows: In Section 2, we collect the necessary background on the incompressible Euler fluid equations on Riemannian manifolds, including well-posedness, a blow-up criterion, and the preservation of symmetries under the flow. Section 3 is devoted to the geometry of compact cohomogeneity one manifolds: we describe the structure of the orbit space and the slice theorem, the form of $G$-invariant metrics and vector fields, and the smoothness conditions that $G$-invariant vector fields must satisfy near nonprincipal orbits. In Section 4, we prove Theorem \ref{maintheorem} by deriving the Euler equations in the $G$-invariant setting and establishing uniform $C^1$ bounds on the solution to invoke the aforementioned blow-up criterion, treating the cases $M/G \simeq [0,1]$ (where there are nonprincipal orbits) and $M/G \simeq S^1$ (where all orbits are principal) separately. Finally, Section 5 extends these results to compact cohomogeneity one manifolds with boundary.

\begin{ack}
    The authors are grateful to Artem Pulemotov, Ramiro Lafuente and Patrick Donovan for comments on an earlier version of this paper. We also thank Wolfgang Ziller for insightful discussions regarding smoothness conditions for invariant vector fields on cohomogeneity one manifolds. 

    Both authors were supported financially by the Australian Government through the Australian Research Council grant DE220100919 and a Research Training Program Scholarship, respectively.
\end{ack}

\section{Geometry of the incompressible Euler fluid equations}\label{PrelimI}

In this section, we recall some important facts about the incompressible Euler fluid equations \eqref{Euler}--\eqref{BCs0} on Riemannian manifolds. In doing so, we see that the problem \eqref{Euler}--\eqref{BCs0} is well-posed, give a criterion for an initially smooth solution to blow up in finite time, and establish that the problem is compatible with the notion of symmetry.

\subsection{Well-posedness and blow-up criterion}
First, we observe that \eqref{Euler}--\eqref{BCs0} is indeed the natural geometric extension of the classical incompressible Euclidean Euler fluid equations to Riemannian manifolds (possibly with boundary). 
\begin{prop}\label{Eulergeometric}
    Let $\phi:M\to \tilde{M}$ be a time-independent isometry between two compact Riemannian manifolds $(M,\mathsf{g})$ and $(\tilde{M},\tilde{\mathsf{g}})$ (possibly with boundary).  
    If $(u,p)$ is a solution to \eqref{Euler}--\eqref{BCs0} on $(M,\mathsf{g})$, then $(\tilde{u},\tilde{p})$ is a solution on $(\tilde{M},\tilde{\mathsf{g}})$, where $(\tilde{u},\tilde{p})$ is defined according to $\tilde{u}(\phi(x))=\rD\phi_x (u(x))$ and $\tilde{p}(\phi(x))=p(x)$.   
\end{prop}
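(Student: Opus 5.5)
The plan is to regard $\tilde u(t)=\phi_*u(t)$ and $\tilde p(t)=p(t)\circ\phi^{-1}$ as the pushforwards of $u$ and $p$ under the diffeomorphism $\phi$, and to check that each term of \eqref{Euler} and the boundary condition \eqref{BCs0} is natural under isometries. Since $\phi$ is an isometry, $\phi^*\tilde{\sg}=\sg$. First, smoothness in space and time of $(\tilde u,\tilde p)$ is immediate: $\phi$ is smooth and independent of $t$, so $\tilde u(t)=\rD\phi\circ u(t)\circ\phi^{-1}$ and $\tilde p(t)=p(t)\circ\phi^{-1}$.

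The key steps, in order, are the following four identities.
\begin{enumerate}
\item The time derivative commutes with pushforward, $\frac{d\tilde u}{dt}=\phi_*\frac{du}{dt}$. This holds because $\rD\phi_x$ is a fixed linear map on each fibre, independent of $t$.
\item The Levi-Civita connection is natural under isometries, $\tilde\nabla_{\phi_*X}\phi_*Y=\phi_*(\nabla_XY)$. To see this, note that $(X,Y)\mapsto\phi^*\big(\tilde\nabla_{\phi_*X}\phi_*Y\big)$ is a torsion-free connection on $M$ compatible with $\phi^*\tilde{\sg}=\sg$. By uniqueness of the Levi-Civita connection (or directly via the Koszul formula, every term of which is natural), it equals $\nabla$. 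Taking $X=Y=u$ gives $\tilde\nabla_{\tilde u}\tilde u=\phi_*(\nabla_uu)$.
\item The gradient is natural, $\tilde\nabla\tilde p=\phi_*\nabla p$. For any $\tilde v=\rD\phi_x v$ we have $\tilde{\sg}(\phi_*\nabla p,\tilde v)=\sg(\nabla p,v)=\rD p(v)=\rD\tilde p(\tilde v)$.
\item The divergence is natural, $(\tilde\nabla\cdot\tilde u)\circ\phi=\nabla\cdot u$. This follows either from the trace of $\nabla u$ together with step 2, or from $\phi^*\vol_{\tilde{\sg}}=\vol_{\sg}$ (for orientation-preserving $\phi$, else work locally) combined with $\mathcal{L}_{\phi_*u}=(\phi^{-1})^*\mathcal{L}_u\phi^*$.
\end{enumerate}

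Summing these identities, the first equation of \eqref{Euler} for $(\tilde u,\tilde p)$ at $\phi(x)$ equals $\rD\phi_x$ applied to the corresponding expression for $(u,p)$ at $x$, which vanishes. The divergence condition transfers by step 4.

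For the boundary condition, an isometry of manifolds with boundary is a diffeomorphism, so it maps $\partial M$ onto $\partial\tilde M$, and $\rD\phi_x(T_x\partial M)=T_{\phi(x)}\partial\tilde M$. Hence \eqref{BCs0} transfers as well.

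I expect step 2, the naturality of the connection, to be the only step with real content. It is standard, and I would settle it by the uniqueness argument above rather than by computation in coordinates.
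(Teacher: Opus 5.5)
Your proposal is correct and follows the same route as the paper. The paper simply asserts that $u\mapsto\nabla_uu$, $u\mapsto\nabla\cdot u$ and $p\mapsto\nabla p$ are natural under isometries, that the time derivative commutes with the time-independent $\rD\phi$, and that $\rD\phi_x(T_x\partial M)=T_{\phi(x)}\partial\tilde M$; you have supplied the standard justifications it leaves implicit, such as uniqueness of the Levi-Civita connection and the defining property of the gradient.
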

\begin{proof}
    The geometric operations $u\mapsto \nabla_u u$, $u\mapsto \nabla\cdot u$, and $p\mapsto \nabla p$ are invariant under isometries and the metric is independent of time, so both equations of \eqref{Euler} are preserved by applying $\rD\phi$. The boundary condition \eqref{BCs0} is also preserved since $\rD\phi_x(T_x \partial M)=T_{\phi(x)}\partial\tilde{M}$.
\end{proof}

Next, we note that the problem \eqref{Euler}--\eqref{BCs0} is well-posed and also give a necessary condition for a solution to \eqref{Euler}--\eqref{BCs0} to blow up in finite time. 
\begin{thm}\label{Eulerwellposed}
    Let $(M,\mathsf{g})$ be a compact and connected Riemannian manifold, possibly with boundary. For each smooth divergence free vector field $u_0$ on $M$ satisfying $u_0(x)\in T_{x}(\partial M)$ for each $x\in \partial M$, there exists $T>0$, and a unique smooth solution $p,u$ (up to addition of a time-varying constant for $p$) of \eqref{Euler}--\eqref{BCs0} on $[0,T)$, subject to the initial condition $u(0)=u_0$. Moreover, if $T^*<\infty$ is the largest such $T$, then $\limsup_{t\to T^* } \|u(t)\|_{C^1(M,\mathsf{g})}=\infty$. 
\end{thm}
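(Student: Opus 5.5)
This is a classical result, and I would prove it mostly by assembling the standard literature rather than from scratch. The plan has three parts: well-posedness via the Ebin--Marsden geometric approach, uniqueness via an energy estimate, and the blow-up criterion via a Beale--Kato--Majda-type continuation argument.

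\textbf{Existence.} I would follow Ebin and Marsden \cite{EbinMarsden70}. Fix $s>\frac{n}{2}+1$ with $n=\dim M$, and let $\mathcal{D}^s_\mu(M)$ denote the Hilbert manifold of volume-preserving $H^s$ diffeomorphisms. When $\partial M\neq\emptyset$ these diffeomorphisms map $\partial M$ to itself.
\begin{itemize}
\item The $L^2$ geodesic spray on $\mathcal{D}^s_\mu(M)$ is the Euclidean-type spray composed with the Hodge projection $P$ onto divergence-free fields tangent to $\partial M$.
\item This spray is a smooth vector field on $T\mathcal{D}^s_\mu$, so Picard iteration gives a local flow $\eta(t)$.
\item Setting $u=\dot\eta\circ\eta^{-1}$ gives an $H^s$ solution of \eqref{Euler}--\eqref{BCs0}. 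The pressure is recovered from the Neumann problem
\[
\Delta p=-\nabla\cdot(\nabla_u u),\qquad \partial_\nu p=-\langle\nabla_uu,\nu\rangle \ \text{on } \partial M,
\]
which is solvable by the compatibility condition. Its solution is unique up to a constant, which accounts for the time-varying constant in the statement.
\end{itemize}

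\textbf{Smoothness.} Ebin--Marsden show that the existence time of the $H^s$ solution does not depend on $s$ (the no-loss/no-gain of regularity result). Consequently, for smooth $u_0$ the solution lies in $H^s$ for every $s$ on a common interval $[0,T)$. Sobolev embedding then gives spatial smoothness, and time smoothness follows by differentiating the equation in $t$.

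\textbf{Uniqueness.} Take two solutions and let $w=u_1-u_2$. Pairing the difference of the equations with $w$ in $L^2$ gives
\[
\frac{d}{dt}\|w\|_{L^2}^2\le 2\|\nabla u_2\|_{L^\infty}\|w\|_{L^2}^2.
\]
Both the pressure term and the transport term $\langle\nabla_{u_1}w,w\rangle$ integrate to zero, using the divergence-free condition and the tangency condition \eqref{BCs0}. Gronwall's inequality then gives $w\equiv 0$. Finally, the pressure is fixed up to a constant by the elliptic problem above.

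\textbf{Blow-up criterion (the main step).} Suppose $T^*<\infty$ but $\|u(t)\|_{C^1}\le K$ on $[0,T^*)$. Applying $\nabla^\alpha$ with $|\alpha|\le s$ to the equation and using commutator (Kato--Ponce type) estimates on $M$ gives
\[
\frac{d}{dt}\|u\|_{H^s}\le C\big(1+\|\nabla u\|_{L^\infty}\big)\|u\|_{H^s}.
\]
Two facts feed into this estimate:
\begin{itemize}
\item The curvature terms arising from commuting covariant derivatives are lower order.
\item The pressure is controlled by elliptic regularity, $\|\nabla p\|_{H^s}\le C\|u\|_{L^\infty}\|u\|_{H^s}+C\|\nabla u\|_{L^\infty}\|u\|_{H^s}$, where the bilinear structure is handled as in Beale--Kato--Majda and Temam.
\end{itemize}
Gronwall's inequality then bounds $\|u(t)\|_{H^s}$ uniformly for $t<T^*$. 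Since the local existence time depends only on the $H^s$ norm, we can restart the solution from a time close to $T^*$ and continue it past $T^*$, contradicting maximality.

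The principal obstacle is the boundary case. There one needs Hodge and Neumann elliptic estimates with tangential boundary conditions, and must check that the energy identities hold when the differentiated quantities are no longer tangent to $\partial M$. I would first handle this by citing the standard treatments (Ebin--Marsden; Taylor, PDE III, Ch.~17, which proves exactly this criterion on compact manifolds with boundary) rather than redoing the estimates. If a self-contained argument were needed, I would instead use vorticity estimates in $H^{s-1}$ together with the div--curl elliptic estimate $\|u\|_{H^s}\lesssim\|\mathrm{curl}\,u\|_{H^{s-1}}+\|u\|_{L^2}$, valid for divergence-free fields tangent to the boundary. This route avoids differentiating normally at $\partial M$.
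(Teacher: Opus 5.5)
Your proposal is correct and is in substance the same as the paper's proof. The paper simply invokes Taylor, PDE III, Ch.~17: Theorem 3.2 for $H^k$ well-posedness with $k>\frac{n}{2}+1$, the preamble of \S 3 for recovering $p$ up to a time-dependent constant, and Proposition 3.3 for the $C^1$ blow-up criterion. Your Ebin--Marsden construction, energy-based uniqueness and Gr\"onwall/continuation sketch reconstruct those cited results, and you defer the boundary estimates to the same source. One point is in your favour: you explicitly secure a $k$-independent existence interval via no loss of regularity, whereas the paper's ``uniqueness implies the solution is independent of $k$'' leaves this implicit. Uniqueness alone does not rule out the $H^k$ existence times shrinking as $k\to\infty$; that step is really supplied by the blow-up criterion or by Ebin--Marsden's regularity result.
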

\begin{proof}
    This follows from results in \cite[Chapter 17]{TaylorPDE3}. Indeed, Theorem 3.2 gives existence and uniqueness of $u$ in the Sobolev space $H^k$ for any $k>\frac{n}{2}+1$, where $n$ is the dimension of $M$. If $u_0$ is smooth, we can apply this result for all large $k$, and uniqueness implies that the resulting solution must be independent of $k$, and is therefore smooth. The preamble to Section 3 then explains how $p$ is uniquely determined up to a time-varying additive constant, and $\nabla p$ is uniquely determined. The blow-up criterion is stated as Proposition 3.3 in the same chapter. 
\end{proof}

\subsection{Preservation of symmetries}
In this paper, we are interested in analysing highly symmetric solutions to \eqref{Euler}--\eqref{BCs0}. It is useful to know that the unique solution will automatically be highly symmetric if it starts this way. First, we explain what we mean by ``symmetry''. 

\begin{defin}\label{Ginvariant}
    Let $G$ be a Lie group acting smoothly on a smooth manifold $M$, with associated diffeomorphisms $\varphi_g:M\to M$, $x\mapsto g\cdot x$, for each $g\in G$. We say that
    \begin{itemize}
        \item a Riemannian metric $\mathsf{g}$ on $M$ is \emph{$G$-invariant} if $\varphi_g$ is an isometry of $(M,\mathsf{g})$ for each $g\in G$,
        \item a vector field $u$ on $M$ is \emph{$G$-invariant} if $u(\varphi_g(x))=d\varphi_g(u(x))\in T_{\varphi_g(x)}M$ for each $x\in M$ and $g\in G$, and 
        \item a smooth function $p:M\to \mathbb{R}$ is \emph{$G$-invariant} if $p(\varphi_g(x))=p(x)$ for all $x\in M$ and $g\in G$.
    \end{itemize}
    The set of all $G$-invariant vector fields on $M$ is denoted $\fX^G(M)$.
\end{defin}
We now show that the initial-value problem for the Euler fluid equations \eqref{Euler}--\eqref{BCs0} is compatible with this notion of symmetry. 

\begin{thm}\label{sympreserve}
    Let $(M,\mathsf{g})$ be a compact and connected Riemannian manifold (possibly with boundary), and let $(u,p)$ be a smooth solution to \eqref{Euler}--\eqref{BCs0} on $[0,T)$. Suppose $G$ is a Lie group acting smoothly on $M$, and $\mathsf{g}$ is $G$-invariant. The vector field $u(t)$ and the scalar field $p(t)$ are $G$-invariant for all $t\in [0,T)$ if and only if $u(0)$ is $G$-invariant. 
\end{thm}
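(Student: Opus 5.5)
The forward direction is trivial: if $u(t)$ is $G$-invariant for all $t\in[0,T)$, then in particular $u(0)$ is. For the converse, the plan is to combine the isometry invariance of Proposition \ref{Eulergeometric} with the uniqueness statement in Theorem \ref{Eulerwellposed}. Fix $g\in G$ and apply Proposition \ref{Eulergeometric} with $\tilde M=M$, $\tilde{\mathsf g}=\mathsf g$ and $\phi=\varphi_g$. This is legitimate because $\mathsf g$ is $G$-invariant, so $\varphi_g$ is a time-independent isometry of $(M,\mathsf g)$. It also maps $\partial M$ to $\partial M$, as any diffeomorphism does. We obtain a new smooth solution $(\tilde u,\tilde p)$ of \eqref{Euler}--\eqref{BCs0} on $[0,T)$, defined by
\begin{align*}
\tilde u(t)(\varphi_g(x))=\rD(\varphi_g)_x\bigl(u(t)(x)\bigr),\qquad \tilde p(t)(\varphi_g(x))=p(t)(x).
\end{align*}

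At $t=0$, the $G$-invariance of $u(0)$ gives exactly $\tilde u(0)=u(0)$. The uniqueness part of Theorem \ref{Eulerwellposed} then yields $\tilde u(t)=u(t)$ for all $t\in[0,T)$. Here we should note that uniqueness holds on the whole common interval, not just on some short $[0,T')$. This follows by the standard continuation argument: the set of times where the two solutions agree is closed, and it is open by local uniqueness applied from each time $t_0$, using time-translation invariance. Since $g$ was arbitrary, $u(t)$ is $G$-invariant.

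For the pressure, uniqueness only determines $\tilde p(t)=p(t)+c_g(t)$ for some function $c_g(t)$ of time alone. This is the main subtlety. The cleanest route avoids constants entirely. Taking the divergence of the first equation in \eqref{Euler} shows that $\Delta p=-\nabla\cdot(\nabla_u u)$, with the Neumann boundary condition $\partial_\nu p=-\ip{\nabla_u u}{\nu}$ when $\partial M\neq\emptyset$. This boundary condition comes from pairing the equation with the normal $\nu$ and using \eqref{BCs0}. Once $u(t)$ is $G$-invariant, the right-hand sides are $G$-invariant, and so $p(t)\circ\varphi_g$ solves the same Neumann problem. Hence $p(t)\circ\varphi_g$ differs from $p(t)$ by a constant $c_g(t)$.

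To show $c_g(t)=0$, normalise $p$ to have zero mean, as permitted by the freedom in Theorem \ref{Eulerwellposed}. Since $\varphi_g$ preserves the Riemannian volume, $p(t)\circ\varphi_g$ also has zero mean, which forces $c_g(t)=0$. Alternatively, for compact $G$, integrating $c_g$ over $M$ gives $c_g\vol(M)=\int_M p\circ\varphi_g-\int_M p=0$ directly. With the normalisation, this proves that $p(t)$ is $G$-invariant for every $t$. The statement should be read with this convention, since otherwise adding a non-invariant function is impossible anyway: only time-dependent constants are free.
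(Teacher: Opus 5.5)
Your proof is correct. The first half matches the paper's argument exactly: apply Proposition \ref{Eulergeometric} to $\varphi_g$, then use uniqueness from Theorem \ref{Eulerwellposed} to get $\tilde u=u$. Your remark about propagating uniqueness to all of $[0,T)$ is a point the paper leaves implicit.

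The two arguments differ in how they handle the pressure. The paper gets $\nabla\tilde p_g=\nabla p$ directly from the momentum equation once $\tilde u_g=u$. Your Neumann-problem detour is valid but not needed for this step. The paper then removes the constant algebraically: it checks that $g\mapsto c(g)$ is a homomorphism $G\to\mathbb{R}$ and concludes that $c(G)$ is a compact, hence trivial, subgroup. You remove it analytically instead. Since $\varphi_g$ is an isometry, it preserves the Riemannian volume, so $c_g(t)\vol(M)=\int_M(\tilde p(t)-p(t))=0$.

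Your route is shorter and does not use compactness of $G$. The paper's ``compact subgroup'' step tacitly assumes it, even though the theorem is stated for an arbitrary Lie group; the paper's argument could be repaired by noting that $c(G)$ is bounded because $p$ is bounded on compact $M$. Two of your qualifications are therefore unnecessary:
\begin{itemize}
\item The restriction ``for compact $G$'' in your alternative argument can be dropped.
\item The zero-mean normalisation is superfluous. The integral identity gives $c_g(t)=0$ for any pressure, and adding a time-dependent constant never affects $G$-invariance anyway. So the statement holds as written, with no convention needed.
\end{itemize}
You also write $c_g(t)$ where the paper writes $c(g)$; the dependence on $t$ is indeed there, so your notation is the more accurate one.
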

\begin{proof}
    If $(u,p)$ solves \eqref{Euler}--\eqref{BCs0}, then for each $g\in G$, Proposition \ref{Eulergeometric} can be applied to the isometry $\varphi_g:M\to M$ to produce another solution $(\tilde{u}_g,\tilde{p}_g)$. If $u(0)$ is $G$-invariant then $u(0)=\tilde{u}_g(0)$, i.e., the initial conditions coincide. By the uniqueness assertion of Theorem \ref{Eulerwellposed}, we find that $\tilde{u}_g(t)=u(t)$ for all $t\in[0,T)$. It follows from \eqref{Euler} that $\nabla \tilde{p}_g(t)=\nabla p(t)$ for all $t\in[0,T)$, and in particular, since $M$ is connected, there is a function $c:G\to \mathbb{R}$ such that $\tilde{p}_g(t)=p(t)+c(g)$ for all $t\in[0,T)$. It is clear that $c:G\to \mathbb{R}$ is smooth, and furthermore, it is a group homomorphism (with the additive group structure on $\mathbb{R}$) since for all $g_1,g_2\in G$ and $x\in M$, we have 
    \begin{align*}
        c(g_1\cdot g_2)&=\tilde{p}_{g_1\cdot g_2}(t,x)-p(t,x)\\
        &=p (t,(g_1\cdot g_2) \cdot x)-p(t,x)\\
        &=p(t,g_1\cdot (g_2\cdot x))-p(t,x)\\
        &=\tilde{p}_{g_1}(t,g_2\cdot x)-p(t,x)\\
        &=p(t,g_2\cdot x)+c(g_1)-p(t,x)\\
        &=\tilde{p}_{g_2}(t,x)+c(g_1)-p(t,x)\\
        &=c(g_2)+c(g_1). 
    \end{align*}
    It follows that $c(G)$ is a compact subgroup of $\mathbb{R}$, and is therefore trivial. This shows that if $u(0)$ is invariant, then the same is true for $u(t)$ and $p(t)$ for all $t\in [0,T)$. The converse is obvious. 
\end{proof}

\section{Invariant geometry}
In Section \ref{PrelimI}, we saw that if the compact Lie group $G$ is acting smoothly and isometrically on a connected and compact Riemannian manifold $(M,\mathsf{g})$, then the incompressible Euler fluid equations \eqref{Euler}--\eqref{BCs0} are well-posed within the class of velocity-pressure pairs $(u,p)$ that are $G$-invariant. The purpose of this section is to describe the $G$-invariant Riemannian metrics, vector fields and scalar functions on $M$ in the special cases where $G$ acts on $(M,\mathsf{g})$ transitively or with cohomogeneity one (i.e., with at least one orbit that has codimension one in $M$). For this section and the next, we will assume that the compact manifold $(M,\mathsf{g})$ does \textit{not} have boundary, since this is the situation in the statement of Theorem \ref{maintheorem}.  

\subsection{Homogeneous spaces}
We begin by studying the case where $(M,\mathsf{g})$ is a homogeneous Riemannian manifold, i.e., $G$ acts transitively and isometrically. The main purpose of this subsection is to describe the $G$-invariant vector fields. Most of this material is standard and can be found in, for example, \cite[Chapter 7]{Besse}, \cite[Chapter 6]{AlexandrinoBettiol15}, and \cite[Appendix B]{BohmLafuente23}.

Choose any point $p\in M$, and let $H\subseteq G$ denote its isotropy subgroup. In the homogeneous case, $M$ is equivariantly diffeomorphic to $G/H$. Denote by $\fG$ and $\fH$ the Lie algebras of $G$ and $H$, respectively. Since $G$ is compact, we can equip it with a bi-invariant Riemannian metric $Q$, which induces an $\Ad(G)$-invariant inner product on $\mathfrak{g}$, i.e.,  $Q(\Ad(g)x,\Ad(g)y)=Q(x,y)$ for all $x,y\in\fG$ and $g\in G$. Let $\fM$ be the $Q$-orthogonal complement of $\fH$ inside $\fG$. For each $X\in\fG$, we can consider the \emph{action field} $X^*$ defined by $X^*(gH)=\frac{d}{dt}|_{t=0}(\exp(tX)\cdot g)H\in T_{gH}(G/H)$. Using action fields, we can identify $\fM$ with $T_{eH}(G/H)$ by sending $X$ to $X^*(eH)$.

There is a relationship between action fields and $G$-invariant vector fields on $G/H$. By Definition~\ref{Ginvariant}, a $G$-invariant vector field is completely determined by its value at $eH$, so every $G$-invariant vector field corresponds to a vector in $\fM\simeq T_{eH}(G/H)$. However, it is not true that every vector in $\fM$ corresponds to a $G$-invariant vector field.

\begin{lem}\label{hominvvec}
   Let $\psi:\fM\to T_{eH}(G/H)$ be such that $\psi(X)=X^*(eH)$. If $X\in\fX^G(G/H)$, then $\psi^{-1}(X(eH))\in\fM_0$, where $\mathfrak{m}_0\subseteq \mathfrak{m}$ is the largest subspace on which $\Ad(H)$ acts trivially.  Moreover, if $Y\in\fM_0$, then there exists $X\in\fX^G(G/H)$ such that $X(eH)=\psi(Y)$.
\end{lem}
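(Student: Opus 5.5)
The plan is to compute how the isotropy group acts on $T_{eH}(G/H)$ and then check compatibility with invariance. For $h\in H$ and $X\in\fG$,
\begin{align*}
d\varphi_h\bigl(X^*(eH)\bigr)=\frac{d}{dt}\Big|_{t=0}h\exp(tX)H=\frac{d}{dt}\Big|_{t=0}\exp(t\Ad(h)X)\,hH=\bigl(\Ad(h)X\bigr)^*(eH),
\end{align*}
using $hH=eH$. Since $\fH$ is $\Ad(H)$-invariant and $Q$ is $\Ad(G)$-invariant, $\fM$ is also $\Ad(H)$-invariant. Moreover, $\psi$ is an isomorphism because the map $X\mapsto X^*(eH)$ has kernel $\fH$. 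It follows that $\psi\circ\Ad(h)|_{\fM}=d\varphi_h\circ\psi$ on $\fM$, so $\psi$ intertwines the isotropy representation with $\Ad(H)|_{\fM}$.

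For the first claim, let $X\in\fX^G(G/H)$. Definition~\ref{Ginvariant} applied with $g=h\in H$ and $x=eH$ gives $X(eH)=d\varphi_h(X(eH))$. Setting $Y=\psi^{-1}(X(eH))$, the intertwining relation yields $\psi(\Ad(h)Y)=\psi(Y)$. Since $\psi$ is injective, $\Ad(h)Y=Y$ for all $h\in H$. The set of $\Ad(H)$-fixed vectors in $\fM$ is a subspace, and it is the largest subspace on which $\Ad(H)$ acts trivially, namely $\fM_0$. Hence $Y\in\fM_0$.

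For the converse, take $Y\in\fM_0$ and define $X(gH)=d\varphi_g(\psi(Y))$. To check well-definedness, suppose $gH=g'H$, so that $g'=gh$ with $h\in H$. Then
\[
d\varphi_{g'}\psi(Y)=d\varphi_g\,d\varphi_h\psi(Y)=d\varphi_g\psi(\Ad(h)Y)=d\varphi_g\psi(Y).
\]
Invariance follows from $d\varphi_k X(gH)=d\varphi_{kg}\psi(Y)=X(kgH)$, and clearly $X(eH)=\psi(Y)$.

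The main obstacle is smoothness of $X$. One option is to use a local smooth section $s:U\to G$ of the principal bundle $G\to G/H$ and write $X(x)=d\varphi_{s(x)}\psi(Y)$, which depends smoothly on $x$. A more explicit route is to note that $X(gH)=(\Ad(g)Y)^*(gH)$, again by the computation above. This is smooth in $g$ and $H$-invariant, so it descends to a smooth field on $G/H$ because $G\to G/H$ is a submersion.
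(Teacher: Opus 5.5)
Your proposal is correct and follows the paper's own argument. The paper likewise uses that $\psi$ intertwines the isotropy action with $\Ad(H)|_{\fM}$, so an invariant field's value at $eH$ corresponds to an $\Ad(H)$-fixed vector, and that an $H$-fixed vector at $eH$ extends via the $G$-action; you additionally verify the intertwining relation, well-definedness and smoothness, which the paper leaves implicit.
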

\begin{proof}
    In order for $X\in \fX^G(G/H)$ to be well-defined, $X(eH)$ must be fixed by the action of $H$. The map $\psi$ is an intertwiner between the isotropy representation $H\ni h\mapsto (\rD\varphi_h)_e$ on $T_{eH}(G/H)$ and the adjoint representation $H\ni h\mapsto\Ad(h)$ on $\fM$, so $\psi^{-1}(X(eH))$ must be fixed by $\Ad(H)$ in $\fM$. Conversely, given $Y\in\fM_0$, the vector $\psi(Y)\in T_{eH}(G/H)$ is fixed by the action of $H$, so extends to a $G$-invariant vector field on $G/H$.
\end{proof}
\begin{rmk}
    In general, the unique $G$-invariant vector field $X$ with $X(eH)\in \mathfrak{m}_0$ does not coincide with $\psi^{-1}(X(eH))$, except at $eH$. 
\end{rmk}

We conclude this section by observing that when $G$ is compact, $G$-invariant vector fields on $G/H$ are always divergence-free. 
\begin{lem}\label{divhom}
    Let $X\in\fX^G(G/H)$. If $\sg$ is $G$-invariant, then $\nabla\cdot X=0$.
\end{lem}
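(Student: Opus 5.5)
Since $\nabla\cdot X$ is a smooth function on $G/H$, the plan is to show that it is $G$-invariant, hence constant, and then that this constant is zero.

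\emph{Invariance.} By Definition \ref{Ginvariant}, $X$ is preserved by each isometry $\varphi_g$. Divergence is natural under isometries, as already used in the proof of Proposition \ref{Eulergeometric}. Hence
\[
(\nabla\cdot X)(\varphi_g(x))=(\nabla\cdot X)(x)
\]
for all $g\in G$ and $x\in G/H$. Because $G$ acts transitively, $\nabla\cdot X\equiv c$ for some constant $c$.

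\emph{Vanishing of the constant.} Here we use compactness: $G/H$ is a closed manifold, since $G$ is compact and $H$ is a closed subgroup. The divergence theorem, with no boundary term, gives
\[
c\,\vol(G/H,\sg)=\int_{G/H}\nabla\cdot X\,d\vol_{\sg}=0.
\]
Since the volume is positive, $c=0$.

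An alternative argument avoids integration. Write $\nabla\cdot X=\frac{\mathcal{L}_X\,d\vol_{\sg}}{d\vol_{\sg}}$ and observe that the flow of $X$ commutes with the $G$-action, so the same invariance argument applies. I prefer the integral argument because it is shorter.

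The only step needing any care is justifying the naturality of the divergence under isometries, and this is standard. So there is no real obstacle; the main subtlety is remembering that compactness is what kills the constant. For a noncompact homogeneous space, such as $\R$ acting on itself, the constant vanishes for other reasons, but in general unimodularity is what matters.
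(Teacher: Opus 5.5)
Your proof is correct and follows the paper's own argument. Naturality of the divergence under the isometries $\varphi_g$ makes $\nabla\cdot X$ a $G$-invariant, hence constant, function on $G/H$, and the divergence theorem on the closed manifold $G/H$ then forces that constant to be zero.
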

\begin{proof}
    For a $G$-invariant vector field $X$, the scalar function $\nabla\cdot X$ is itself $G$-invariant (and therefore constant on $G/H$). The divergence theorem then gives the result. 
\end{proof}

\subsection{Principal part of cohomogeneity one manifolds}

We now extend the theory of the previous subsection to Riemannian manifolds $(M,\mathsf{g})$ where $G$ is acting with cohomogeneity one, i.e., there is an orbit with codimension one. This material is also standard and can be found in, for example, Chapters 3 and 6 of \cite{AlexandrinoBettiol15}.

For each $x\in M$, use $G_x\subseteq G$ to denote the isotropy subgroup. Unlike in the homogeneous case, isotropy groups can vary considerably.  
\begin{defin}\label{orbittypes}
  The orbit $Gx$ is said to be:
    \begin{itemize}
        \item \emph{principal} if for all $y\in M$, there exists $g\in G$ such that $G_x\subseteq gG_yg^{-1}$;
        \item \emph{singular} if $\dim(Gx)<\dim(Gy)$ for a principal orbit $Gy$;
        \item \emph{exceptional} if it is nonprincipal and nonsingular.
    \end{itemize}
\end{defin}

To describe the structure of $M$, we will use the slice theorem (see \cite[Theorem 3.57]{AlexandrinoBettiol15}). The slice theorem states that for every orbit $Gx$, there is a $G$-invariant neighbourhood of $Gx$ that is $G$-equivariantly diffeomorphic to the twisted product $G\times_{G_x}D$, where $D$ is a ball in the normal space $V$ to the orbit at $x$, the isotropy group $G_x$ acts linearly on $D$ (in particular, via the slice representation), and $G$ acts on the product by left multiplication in the first coordinate.

In the case where $Gx$ is a principal orbit, the action of the isotropy $G_x=H$ on $V$ is trivial (and $\dim V=1$). In the case where $Gx$ is nonprincipal, the isotropy $G_x=K$ acts transitively on spheres in $V$. If $Gx$ is exceptional, then $\dim V=1$, otherwise $\dim V>1$. This yields the following classification of the orbit space $M/G$ when $M$ is a closed manifold.

\begin{thm}\label{Ch1Structure}
If the connected and closed Riemannian manifold $(M,\mathsf{g})$ admits an isometric and cohomogeneity one action of a Lie group $G$, then $M/G$ is diffeomorphic to $S^1$ or $[0,1]$. In the first case, all orbits are principal. In the second case, $(0,1)$ corresponds to principal orbits, whereas the boundary points correspond to singular or exceptional orbits.
\end{thm}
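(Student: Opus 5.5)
The plan is to show that $M/G$ is a connected one-dimensional manifold, possibly with boundary. The classification of compact connected $1$-manifolds then gives the two cases. The slice theorem supplies all the local information needed.

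\textbf{Step 1: $M/G$ is a compact connected Hausdorff space.} It is compact and connected as the continuous image of $M$ under the quotient map $\pi$. It is Hausdorff because $G$ is compact; here $G$ may be taken to be compact, as in the paper's standing assumptions, or replaced by the closure of its image in the compact isometry group of $(M,\mathsf{g})$, which has the same orbits. I will also use that $M/G$ is second countable.

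\textbf{Step 2: local charts on $M/G$ from the slice theorem.} Fix $x$ and a slice neighbourhood $G\times_{G_x}D$. Then a neighbourhood of $\pi(x)$ is homeomorphic to $D/G_x$.

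First I check that every orbit has codimension at least one. Principal orbits all have the same dimension, and the principal orbit type is open and dense. Since some orbit has codimension one, principal orbits have codimension exactly one, and every other orbit has codimension at least one.

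\begin{itemize}
\item If $Gx$ is principal, $G_x$ acts trivially on the one-dimensional $V$. So $D/G_x\cong(-\e,\e)$, an interior point.
\item If $Gx$ is nonprincipal, the nearby orbits $G[g,v]$ with $v\neq0$ lie in the open dense principal part, so they are principal. They have codimension one, so $G_x$ orbits in $D$ have codimension one, i.e. $G_x$ acts transitively on the spheres of $V$. Hence $D/G_x\cong[0,\e)$ via $v\mapsto|v|$, and $\pi(x)$ is a boundary point. If $\dim V=1$ the sphere is two points swapped by $G_x$ and the orbit is exceptional. If $\dim V>1$ the orbit has smaller dimension and is singular.
\end{itemize}

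Thus $M/G$ is a compact connected topological $1$-manifold with boundary. The charts $|v|$ or $v$ are compatible with the $G$-invariant distance to a fixed orbit, so it carries a smooth structure.

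\textbf{Step 3: classification.} A compact connected $1$-manifold is diffeomorphic to $S^1$ (no boundary) or to $[0,1]$ (two boundary points). By Step 2, the boundary points are exactly the nonprincipal orbits. In the $S^1$ case every orbit is principal. In the $[0,1]$ case the interior $(0,1)$ consists of principal orbits and the endpoints are singular or exceptional.

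\textbf{Main obstacle.} The delicate point is the transitivity claim in Step 2: that the slice representation at a nonprincipal orbit is transitive on spheres. This needs the facts that principal orbits are open, dense and of codimension one, and that the slice representation at a principal orbit is trivial. These are standard (see \cite[Chapter 3]{AlexandrinoBettiol15}), so I will cite them rather than reprove them.
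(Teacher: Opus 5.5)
Your skeleton is the same as the paper's. The paper gives no separate proof; it reads the result off the slice theorem (trivial one-dimensional slice representation at principal orbits, slice representation transitive on spheres at nonprincipal orbits) together with the classification of compact $1$-manifolds. Two steps in your write-up need repair.

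In Step 1, the claim that the closure of the image of $G$ in the isometry group has the same orbits is false. Take the irrational linear flow of $\R$ on the flat torus $T^2$. Every orbit is a dense line of codimension one, the closure is $T^2$ acting transitively, and $M/G$ carries the indiscrete topology. So the statement genuinely fails for non-compact $G$, and you must rely on the standing assumption that $G$ is compact (or at least acts properly).

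In Step 2, the sentence ``the nearby orbits $G[g,v]$ with $v\neq0$ lie in the open dense principal part, so they are principal'' is a non sequitur. Density only gives \emph{some} $v_0\neq0$ in $D$ with $G[e,v_0]$ principal; the claim for all $v\neq0$ is exactly what has to be shown. Reverse the order:
\begin{enumerate}[(i)]
\item The codimension of $G[e,v]$ in $M$ equals the codimension of $G_xv$ in $V$. Hence $G_xv_0$ is a compact submanifold of full dimension in the sphere of radius $|v_0|$, and so is open and closed in it.
\item When $\dim V\ge2$ that sphere is connected, so $G_x$ is transitive on it and, by linearity, on every sphere. Then every $v\neq0$ has isotropy conjugate to $(G_x)_{v_0}$, so is principal.
\item When $\dim V=1$ the codimension condition is vacuous. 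Your ``i.e.\ transitive on spheres'' does not follow, and the swap of $\pm v$ is only asserted. What you need is the converse of the fact you cite. If $G_x$ acted trivially on $V$, every point $[g,v]$ of the tube would have isotropy $gG_xg^{-1}$. By density, $G_x$ would then be conjugate to the principal isotropy group, and $Gx$ would be principal. Hence $G_x$ acts on $V\cong\R$ surjectively onto $\{\pm1\}$. The kernel is the common isotropy at every $v\neq0$, and by density it is principal.
\end{enumerate}
With these fixes your argument is complete.
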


We will now study $G$-invariant metrics on closed and connected cohomogeneity one manifolds. For now, we will focus on the geometry on the set $M_p$ of points of $M$ that lie on principal orbits, with nonprincipal orbits being discussed in the next subsection.

Let $\sg$ be a $G$-invariant metric on $M$. At each $x\in M$, consider the splitting of $T_xM$ into the tangent space $T_x(Gx)$ and normal space $N_x(Gx)$ of the orbit~$Gx$.

\begin{defin}\label{vert_hor} A smooth curve $\alpha:[a,b]\to M$ is
    \begin{itemize}
        \item \emph{vertical} if $\alpha'(r)\in T_{\alpha(r)}(G\alpha(r))$ for all $r\in[a,b]$, and 
        \item \emph{horizontal} if $\alpha'(r)\in N_{\alpha(r)}(G\alpha(r))$ for all $r\in[a,b]$.
    \end{itemize}
    A smooth vector field $u\in\fX(M)$ is
    \begin{itemize}
        \item \emph{vertical} if $u(x)\in T_x(Gx)$ for all $x\in M$, and 
        \item \emph{horizontal} if $u(x)\in N_x(Gx)$ for all $x\in M$. 
    \end{itemize}
\end{defin}

Let $\gamma:[0,1]\to M$ be a horizontal geodesic that intersects all of the orbits exactly once (except possibly at $\gamma(0)$ and $\gamma(1)$) and such that $G\gamma(r)$ is a principal orbit for all $r\in(0,1)$. Such a geodesic exists because if we start a geodesic at a point on a principal orbit with initial velocity orthogonal to that orbit, then this geodesic remains orthogonal to every orbit it meets. The isotropy groups $G_{\gamma(r)}$ are all equal for $r\in(0,1)$. This common isotropy is called the \emph{principal isotropy group} and is denoted by $H$. If $M/G=[0,1]$, then $G\gamma(0)$ and $G\gamma(1)$ are nonprincipal orbits. If $M/G=S^1$, then $G\gamma(0)=G\gamma(1)$ is a principal orbit (and $G_{\gamma(0)}=H$), although it is not necessarily true that $\gamma(0)=\gamma(1)$. For now, we will focus on the case $M/G=[0,1]$, with the case $M/G=S^1$ discussed at the end of this subsection.

Consider the horizontal vector field $\gamma'(r)$ along $\gamma$ for $r\in(0,1)$. This can be extended to a horizontal $G$-invariant vector field $\partial_r$ on $M_p$ by the action of $G$:
$$\partial_r(g\gamma(r))=\rD\varphi_g\gamma'(r).$$
This is well-defined since $\sg$ is $G$-invariant, so acting by $G$ preserves the normal space. The splitting of $T_xM$ at every point $x\in M_p$ into horizontal and vertical directions is then given by
$$T_xM=\spn\{\partial_r\}\oplus T_x(Gx).$$
Let $\rD r$ be the covector field corresponding to $\partial_r$. The following lemma describes all $G$-invariant metrics on $M_p$ in terms of invariant metrics on the homogeneous space fibres. 
\begin{lem}
    Let $H$ be the principal isotropy group. Up to scale, the $G$-invariant metric $\sg$ on $M_p$ can be written as
    \begin{equation}\label{metricform}
        \sg=\rD r^2+\sg_r,
    \end{equation}
    where $\sg_r$ is a smoothly-varying one-parameter family of $G$-invariant metrics on the homogeneous space $G/H$ for $r\in(0,1)$.
\end{lem}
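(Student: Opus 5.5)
The plan is to pass to coordinates adapted to the horizontal geodesic $\gamma$ and then read off the metric in those coordinates.

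\emph{Step 1: a diffeomorphism.} Define
$$\Phi:(0,1)\times G/H\to M_p,\qquad \Phi(r,gH)=g\gamma(r).$$
This is well defined, because $H=G_{\gamma(r)}$ for every $r\in(0,1)$. It is $G$-equivariant. It is bijective, because $\gamma$ meets each principal orbit exactly once and each orbit $G\gamma(r)$ is diffeomorphic to $G/H$. It is a local diffeomorphism, because its differential sends $\partial/\partial r$ to $\partial_r$ and maps $T_{gH}(G/H)$ isomorphically onto the tangent space of the orbit, and $\partial_r$ is transverse to that tangent space. Hence $\Phi$ is a diffeomorphism. By construction $\Phi_*(\partial/\partial r)=\partial_r$, and the function $r$ on $M_p$ is $G$-invariant, with $dr$ dual to $\partial_r$.

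\emph{Step 2: splitting the pulled-back metric.} The splitting $T_xM=\spn\{\partial_r\}\oplus T_x(Gx)$ is $\sg$-orthogonal, since $\partial_r$ is horizontal. So there are no cross terms and
$$\Phi^*\sg=\sg(\partial_r,\partial_r)\,dr^2+\sg_r,$$
where $\sg_r$ denotes the restriction of $\sg$ to the orbit $G\gamma(r)$, transported to $G/H$ via $gH\mapsto g\gamma(r)$. Each $\sg_r$ is $G$-invariant, because $G$ acts isometrically and $\Phi$ is equivariant. The family depends smoothly on $r$, because $\Phi$ and $\sg$ are smooth.

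\emph{Step 3: the normal component.} The remaining point is that $\sg(\partial_r,\partial_r)\equiv1$. Since $\gamma$ is a geodesic, it has constant speed, so $\sg(\gamma'(r),\gamma'(r))$ is a constant $c$. By $G$-invariance, $\sg(\partial_r,\partial_r)=c$ everywhere on $M_p$. Choosing $\gamma$ with unit speed gives $c=1$. If instead $\gamma$ is fixed as parametrised on $[0,1]$, then $c$ equals the squared length $L^2$ of the geodesic. Rescaling the metric (or equivalently reparametrising $r$ by arclength, so that $r\in(0,L)$) then gives exactly $\rD r^2+\sg_r$. This is what ``up to scale'' in the statement refers to.

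The main obstacle is Step 1, specifically that $\Phi$ is a diffeomorphism onto $M_p$ and not merely a bijective immersion. Injectivity uses that $\gamma$ meets each orbit once. Surjectivity onto $M_p$ uses that $\gamma((0,1))$ hits every principal orbit, which follows from $M/G\simeq[0,1]$ with interior points principal (Theorem \ref{Ch1Structure}). A bijective local diffeomorphism is then a diffeomorphism.
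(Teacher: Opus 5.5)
Your proof is correct. The paper gives no proof of this lemma: it is quoted as standard material from Alexandrino--Bettiol. Your argument is the standard one, and it formalizes exactly the setup the paper describes just before the lemma. That setup is the horizontal geodesic $\gamma$ meeting each principal orbit once, the $G$-translate $\partial_r$ of $\gamma'$, and the orthogonal splitting $T_xM=\spn\{\partial_r\}\oplus T_x(Gx)$. Your map $\Phi(r,gH)=g\gamma(r)$ packages all three.

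Your reading of ``up to scale'' is also the right one. Because $\gamma$ is parametrized on $[0,1]$, it has constant speed $L$, so $\sg(\partial_r,\partial_r)=L^2$ before rescaling.
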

From now, we will assume that $G$-invariant metrics always have the form \eqref{metricform}. Clearly, for the purposes of proving Theorem \ref{Eulerwellposed}, fixing the scale does not cause any loss of generality. Now, we will compute the Levi-Civita connection. First, if $X$ and $Y$ are both vertical vector fields, then since the principal orbits have codimension one, $\nabla_XY$ splits into horizontal and vertical components
\begin{equation}\label{splitLC}
    \nabla_XY=\sg(\nabla_XY,\partial_r)\partial_r+\nabla^r_XY,
\end{equation}
where $\nabla^r$ is the Levi-Civita connection of $\sg_r$ on the orbit $G\gamma(r)=G/H$. Second, it is clear that $\nabla_{\partial_r}\partial_r=0$ because $\gamma(r)$ is a geodesic. Finally, if $X$ is a vertical vector field, then the mixed connection term $\nabla_{X}\partial_r$ encodes the extrinsic geometry of the orbits. 
\begin{defin}\label{shapeoperatordef}
    For each $r\in(0,1)$, the \emph{shape operator} $S_r:\fX(G\gamma(r))\to\fX(G\gamma(r))$ is the linear operator defined by
    $$S_rX=-\nabla_X \partial_r.$$
\end{defin}

\begin{lem}\label{shapeoperator}
    Let $X,Y\in\fX(G\gamma(r))$. Then $S_r$ is $\sg_r$-symmetric and
    $$\sg_r(S_rX,Y)=\sg(\nabla_XY,\partial_r).$$
\end{lem}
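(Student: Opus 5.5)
The statement is local and purely tensorial, so I will prove it pointwise on a principal orbit $G\gamma(r)$ using only two facts: $\partial_r$ is a unit normal field along the orbit, and the Levi-Civita connection is metric-compatible and torsion-free.

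\textbf{Step 1: the identity.} Let $X,Y$ be vertical vector fields, extended arbitrarily to a neighbourhood; the quantities involved are tensorial in $X$ and $Y$, so the extension does not matter. Since $Y$ is tangent to the orbits and $\partial_r$ is normal to them, $\sg(Y,\partial_r)=0$ along the orbit. Differentiating this in the tangential direction $X$ with metric compatibility gives
\[
0 = X\big(\sg(Y,\partial_r)\big) = \sg(\nabla_X Y,\partial_r) + \sg(Y,\nabla_X\partial_r).
\]
By Definition \ref{shapeoperatordef}, $\nabla_X\partial_r=-S_rX$, so
\[
\sg(\nabla_XY,\partial_r)=\sg(S_rX,Y).
\]

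\textbf{Step 2: $S_r$ is vertical and the metrics agree.} I need $S_rX$ to be vertical, so that $\sg(S_rX,Y)=\sg_r(S_rX,Y)$ by \eqref{metricform}. Since $\partial_r$ is a unit field, $\sg(\partial_r,\partial_r)=1$. Differentiating in the direction $X$ gives $2\sg(\nabla_X\partial_r,\partial_r)=0$. Hence $\nabla_X\partial_r$ is orthogonal to $\partial_r$, and is therefore tangent to the orbit, because the orbits have codimension one.

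\textbf{Step 3: symmetry.} By torsion-freeness, $\nabla_XY-\nabla_YX=[X,Y]$. The orbits are submanifolds and $X,Y$ are tangent to them, so $[X,Y]$ is tangent to the orbits as well, and thus $\sg([X,Y],\partial_r)=0$. Therefore $\sg(\nabla_XY,\partial_r)=\sg(\nabla_YX,\partial_r)$. Combining this with Step 1,
\[
\sg_r(S_rX,Y)=\sg(\nabla_XY,\partial_r)=\sg(\nabla_YX,\partial_r)=\sg_r(S_rY,X),
\]
which is exactly the $\sg_r$-symmetry of $S_r$.

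No step is a real obstacle. The only care needed is in Step 3, where one must note that the orbits are integral submanifolds, so the bracket of two vertical fields stays vertical, and that tensoriality allows the fields to be extended off the orbit.
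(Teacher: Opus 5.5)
Your proof is correct. The paper does not prove this lemma; it quotes it as standard background from the cited references on cohomogeneity one manifolds. Your argument is the standard one that the paper implicitly relies on:
\begin{itemize}
\item metric compatibility applied to $\sg(Y,\partial_r)=0$ and to $\sg(\partial_r,\partial_r)=1$;
\item torsion-freeness;
\item the fact that the bracket of two fields tangent to an orbit is again tangent to it.
\end{itemize}

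One wording fix: $\nabla_XY$ is not tensorial in $Y$. The extension of $Y$ is irrelevant for a different reason: $X$ is tangent to the orbit, so $\nabla_XY$ only depends on $Y$ along the orbit. The combination $\sg(\nabla_XY,\partial_r)=\sg_r(S_rX,Y)$ is then tensorial in both arguments.
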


The shape operator allows us to define the mean curvature, which measures the extrinsic curvature of the orbits.

\begin{defin}
    For each $r\in(0,1)$, the \emph{mean curvature} $\cH(r)$ of the orbit $G\gamma(r)$ is the trace of the shape operator. That is, if $\{E_i\}_{i=1}^n$ is a $\sg_r$-orthonormal basis of $\mathfrak{m}\simeq T_{\gamma(r)}(G\cdot \gamma(r))$, then at the point $\gamma(r)$, we have 
    $$\cH(r)=\sum_{i=1}^{n}\sg_r(S_rE_i,E_i).$$
\end{defin}

Since the mean curvature is constant on each orbit, the first variation formula for submanifolds gives us the following result. 
\begin{lem}\label{MCV}
    The mean curvature is given by
    $$\cH(r)=-\frac{1}{2}\frac{d}{dr}(\ln \vol_{\sg_r}(G/H)).$$
\end{lem}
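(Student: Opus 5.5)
The plan is to compute the $r$-derivative of $\sg_r$ on action fields, express the shape operator through it, and then take a trace. Fix a $Q$-orthonormal basis $\{X_i\}$ of $\fM$ and consider the action fields $X_i^*$. Their flows are the isometries $\varphi_{\exp(tX_i)}$. These preserve the $G$-invariant field $\partial_r$, so $[X_i^*,\partial_r]=0$ on $M_p$.

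Define the Gram matrix $A(r)_{ij}=\sg_r(X_i^*,X_j^*)$ at $\gamma(r)$. Metric compatibility and torsion-freeness give
\begin{align*}
A'(r)_{ij}=\partial_r\,\sg(X_i^*,X_j^*)
&=\sg(\nabla_{X_i^*}\partial_r,X_j^*)+\sg(X_i^*,\nabla_{X_j^*}\partial_r)\\
&=-2\,\sg_r(S_rX_i^*,X_j^*),
\end{align*}
where the last step uses the symmetry of $S_r$ from Lemma \ref{shapeoperator}. Hence, as an endomorphism of $\fM\simeq T_{\gamma(r)}(G\gamma(r))$, the shape operator is $S_r=-\tfrac12 A^{-1}A'$. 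Taking the trace yields
$$\cH(r)=-\tfrac12\operatorname{tr}(A^{-1}A')=-\tfrac12\frac{d}{dr}\ln\det A(r).$$

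Next I relate $\det A$ to the orbit volume. Since $G$ acts transitively and isometrically on $G\gamma(r)\simeq G/H$, the Riemannian volume of the orbit equals $c\sqrt{\det A(r)}$. Here $c$ is the $Q$-induced volume of $G/H$, which is independent of $r$.

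This is the step I expect to be the real obstacle, because the normalization must be matched to the statement. With the Riemannian volume, the computation gives $\cH=-\frac{d}{dr}\ln\vol_{\sg_r}(G/H)$. So the factor $\tfrac12$ in the lemma is correct exactly when $\vol_{\sg_r}(G/H)$ is read as the volume density squared, namely $\det A(r)$ up to a constant. I would therefore state the convention explicitly and, if needed, adjust the constant.

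Equivalently, one can argue via the first variation formula. The normal variation $\partial_r$ has $\operatorname{div}_{G\gamma(r)}\partial_r=-\cH$, with $\cH$ constant on the orbit. This gives $\frac{d}{dr}\vol=-\cH\,\vol$, which serves as a cross-check on the constant.
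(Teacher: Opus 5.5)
Your computation is correct, and the discrepancy you flag is a genuine error in the statement, not a matter of convention. With $\vol_{\sg_r}(G/H)$ the Riemannian volume of the orbit, which is what the notation means, one has $\cH(r)=-\frac{d}{dr}\ln\vol_{\sg_r}(G/H)=-\frac12\frac{d}{dr}\ln\det A(r)$. The sign in the lemma is right, but the factor $\frac12$ is wrong.

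The paper's own justification is the first variation formula together with constancy of $\cH$ on orbits. Carried out, it gives exactly your cross-check $\frac{d}{dr}\vol=-\cH\,\vol$, with no $\frac12$. A quick check inside the paper's setting: take $\SO(2)$ rotating the round $S^2$, so that $\sg=\rD r^2+\sin^2 r\,\rD\theta^2$. The orbits have $\vol=2\pi\sin r$ and $S_r=-\cot r$, hence $\cH=-\cot r=-\frac{d}{dr}\ln\vol$, whereas the stated formula gives $-\frac12\cot r$. The $\frac12$ belongs to $\det A$, i.e.\ to $\vol^2$, which is presumably the source of the slip.

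So do not hedge with ``if needed, adjust the constant'' or reinterpret $\vol$ as a squared density. Say plainly that the lemma is false as written, and prove the corrected identity. The error also propagates to Lemma \ref{BCs}: solving $h'=\cH h$ gives $h_0(r)=\vol_{\sg_{1/2}}(G/H)/\vol_{\sg_r}(G/H)$, without the square root. Compare the radial divergence-free fields $c\,r^{-2}\partial_r$ on $\R^3\setminus\{0\}$, whose orbit volume is $4\pi r^2$. The conclusions drawn there are unaffected: $h_0$ still blows up at a singular orbit, and is still nonvanishing and periodic when $M/G=S^1$. Hence Theorem \ref{maintheorem} is unaffected.

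As for the route, the paper only cites the first variation formula. Your main argument is instead a direct computation with the action fields $X_i^*$, which are Killing and commute with the $G$-invariant field $\partial_r$. This gives the pointwise identity $S_r=-\frac12A^{-1}A'$, then $\cH$ via Jacobi's formula, and finally $\vol_{\sg_r}(G/H)=c\sqrt{\det A(r)}$ by homogeneity. You should record explicitly that $\{X_i^*(\gamma(r))\}$ is a basis of $T_{\gamma(r)}(G\gamma(r))$, because $G_{\gamma(r)}=H$ for all $r\in(0,1)$.

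Your route is longer, but it buys more. It exhibits the shape operator as the logarithmic derivative of the orbit metrics, and it makes the normalization impossible to get wrong, which is exactly how you caught the constant. The first-variation route is a one-liner, but its constant is only as reliable as the formula one quotes.
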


Finally, to conclude our discussion about the case $M/G=[0,1]$, we note that if the orbit corresponding to $r=0$ is singular, the mean curvature $\cH(r)$ grows without bound as $r\to 0$ (see \cite[Proposition 1]{Pacini03}).

\begin{lem}\label{horsmoothness}
    Suppose $G\gamma(0)$ is a singular orbit. For any $G$-invariant Riemannian metric $\mathsf{g}$ on $M$, we have $\lim_{r\to 0}\vol_{\sg_r}(G/H)=0$. 
\end{lem}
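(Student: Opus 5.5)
We will prove that $\vol_{\sg_r}(G/H)\to 0$ as $r\to 0$ by comparing the principal orbits near $r=0$ with the singular orbit $G\gamma(0)$ through the slice theorem. Let $K=G_{\gamma(0)}$ and let $V$ be the normal space to $G\gamma(0)$ at $\gamma(0)$. Because the orbit is singular, $\dim V=k+1\geq 2$, and $K$ acts transitively on the unit sphere $S^k\subseteq V$.

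By the slice theorem, a neighbourhood of $G\gamma(0)$ is $G$-equivariantly diffeomorphic to $G\times_K D$. Since $\gamma$ is a horizontal geodesic leaving $\gamma(0)$ orthogonally to the orbit, we may take the slice to be $\exp_{\gamma(0)}(D)$ and identify $\gamma(r)$ with $\exp_{\gamma(0)}(rv)$ for a fixed unit vector $v\in V$. The principal orbit through $\gamma(r)$ is then the image of
\[
\Phi_r:G\times_K S^k\to M,\qquad [g,w]\mapsto g\cdot\exp_{\gamma(0)}(rw).
\]
This map is a smooth family of maps that depends smoothly on $r\in[0,\epsilon)$, including at $r=0$. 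At $r=0$ it collapses each fibre $S^k$ to a point, so that $\Phi_0$ is the bundle projection $G\times_K S^k\to G\gamma(0)=G/K$.

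Now equip $G\times_K S^k$ with a fixed background Riemannian metric. The volume of the orbit $G\gamma(r)$ equals the integral over $G\times_K S^k$ of the Jacobian $|\det d\Phi_r|$, which is the volume distortion of $d\Phi_r$ from the $(\dim G/H)$-dimensional tangent space into $M$. Here we use that $\Phi_r$ is a diffeomorphism onto its image for small $r>0$, by the slice theorem, and that $G\times_K S^k\cong G/H$ equivariantly, because $S^k=K/H$.

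This Jacobian depends continuously on $r$ and on the point, uniformly over the compact domain. At $r=0$ it vanishes identically, since $d\Phi_0$ kills the fibre directions, which have dimension $k\geq 1$. By uniform continuity on a compact set, the integral therefore tends to $0$ as $r\to 0$. The limit is independent of the normalisation of $\sg_r$, because the volume of the orbit as a submanifold is intrinsic.

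The main obstacle is justifying that the parametrisation $\Phi_r$ extends smoothly (or at least continuously in the $C^1$ sense) to $r=0$, together with its differential. I would check this directly from the formula. The map $(g,w,r)\mapsto g\exp_{\gamma(0)}(rw)$ is smooth on $G\times S^k\times\R$ and $K$-invariant, so it descends to a smooth map. From this we get uniform convergence of $d\Phi_r\to d\Phi_0$, which is all the argument requires. More quantitatively, the fibre directions are scaled by $d\Phi_r=O(r)$, so the volume is in fact $O(r^k)$.
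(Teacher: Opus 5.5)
Your argument is correct, and it takes a genuinely different route from the paper. The paper gives no argument of its own for this lemma. It points to Pacini's Proposition 1, namely that the mean curvature $\cH(r)$ of principal orbits blows up as $r\to0$, and implicitly relies on the first-variation identity of Lemma \ref{MCV} linking $\cH$ to $\frac{d}{dr}\ln\vol_{\sg_r}(G/H)$.

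That route needs more than $|\cH|\to\infty$. To conclude $\vol_{\sg_r}(G/H)\to0$ by integrating Lemma \ref{MCV}, the blow-up must have the right sign and be non-integrable in $r$, for example of order $1/r$. So it rests on finer asymptotics taken from the literature.

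You instead work directly from the slice theorem. You parametrise $G\gamma(r)$ by $\Phi_r([g,w])=g\exp_{\gamma(0)}(rw)$ on the fixed compact manifold $G\times_K S^k\cong G/H$. Joint smoothness of $\Phi$ up to $r=0$, together with the fact that $d\Phi_0$ annihilates the $k\ge1$ fibre directions, forces the Jacobian to $0$ uniformly, and hence the volume to $0$.

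What your approach buys:
\begin{itemize}
\item It is self-contained. It uses only that a singular orbit has normal space of dimension at least $2$, and that $K$ acts transitively on the unit sphere of $V$ with isotropy $H$.
\item It gives the quantitative rate $O(r^k)$, where $k+1$ is the codimension of the singular orbit.
\item It makes clear why the statement fails at exceptional orbits. There $k=0$, $\Phi_0$ is a finite covering, and the orbit volumes stay bounded away from $0$.
\end{itemize}

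The paper's citation is shorter on the page and fits the mean-curvature language it uses later in Lemma \ref{BCs}. However, it is only as complete as the imported asymptotics.
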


If $M/G=S^1$, then much of the above analysis still holds, but now all orbits are principal. In particular, we can use the geodesic $\gamma:[0,1]\to M$ to construct a globally defined unit vector field $\partial_r$ which is perpendicular to every orbit, and the $G$-invariant Riemannian metric $\sg$ has the form \eqref{metricform}, where $\sg_r$ is defined for all of $r\in[0,1]$. In order for the metric to remain well-defined and smooth, $\{\sg_r\}_{r\in[0,1]}$ must satisfy some periodic boundary conditions. Indeed, let $a\in G$ be such that $\gamma(0)=a\gamma(1)$. Since $\sg_0$ is $G$-invariant, we have $\sg_0=\rD\varphi_a\sg_1$. However, since $\sg_1$ is also $G$-invariant, we have that $\rD\varphi_a\sg_1=\sg_1$, so $\{\sg_r\}_{r\in[0,1]}$ must satisfy $\sg_0=\sg_1$. By extending $\gamma$ to a horizontal geodesic $\gamma:(-\e,1+\e)\to M$ such that $G\gamma(r)=G\gamma(1+r)$ for $r\in(-\e,\e)$ (which we can do since $G$ acts by isometries), we can use a similar argument to conclude that any possible extension of $\sg_r$ to $r\in(-\e,1+\e)$ must satisfy $\sg_r=\sg_{1+r}$ for $r\in(-\e,\e)$. Since $\sg_r$ is a smooth family of metrics, it follows that the derivatives of all orders of $\sg_0$ and $\sg_1$ also coincide.

Next, we will characterise $G$-invariant vector fields on the principal part of the manifold $M_p$.
\begin{lem}\label{vectorlem}
    Denote $n_0=\dim\fM_0$. There exist finitely many smooth vertical $G$-invariant vector fields $\{Y_i\}_{i=1}^{n_0}$ on $M_p$ that are everywhere linearly independent and are such that every smooth vertical $G$-invariant vector field $v$ on $M_p$ can be written as
    $$v=\sum_{i=1}^{n_0} v_i(r)Y_i,$$
    where the functions $v_1,\dots,v_{n_0}:(0,1)\to\R$ are smooth. Moreover, a smooth vector field $u$ on $M_p$ is $G$-invariant if and only if it has the form
    \begin{equation}\label{vectorform}
      u=h(r)\partial_r+v,  
    \end{equation}
    where $h:(0,1)\to\R$ and $v$ is a smooth vertical $G$-invariant vector field on $M_p$.
\end{lem}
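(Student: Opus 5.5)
We will prove Lemma \ref{vectorlem} by working along the horizontal geodesic $\gamma$ and transporting everything to the rest of $M_p$ with the $G$-action. Since all points $\gamma(r)$, $r\in(0,1)$, have the same isotropy group $H$, $M_p$ is $G$-equivariantly diffeomorphic to $(0,1)\times G/H$ via $(r,gH)\mapsto g\gamma(r)$. Here $G$ acts only on the second factor, and the slices $\{r\}\times G/H$ are the orbits. A $G$-invariant vector field is therefore determined by its values along $\gamma$. At each point $\gamma(r)$ we use the splitting $T_{\gamma(r)}M=\spn\{\partial_r\}\oplus T_{\gamma(r)}(G\gamma(r))$.

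\textbf{Step 1: the splitting (\ref{vectorform}).} Let $u$ be smooth and $G$-invariant on $M_p$, and set $h(r)=\sg(u,\partial_r)(\gamma(r))$. The field $\partial_r$ is $G$-invariant and each $\varphi_g$ is an isometry, so $\sg(u,\partial_r)$ is a $G$-invariant function. Hence $\sg(u,\partial_r)=h(r)$ on all of $M_p$, and $h$ is smooth because it is the composition of smooth maps. Then $v=u-h(r)\partial_r$ is smooth, $G$-invariant, and vertical, since $\partial_r$ is a unit normal. Conversely, any expression $h(r)\partial_r+v$ of this form is clearly smooth and $G$-invariant, because $r$ is a $G$-invariant function.

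\textbf{Step 2: the frame $\{Y_i\}$.} Choose a basis $\{Z_i\}_{i=1}^{n_0}$ of $\fM_0$. The orbit through $\gamma(r)$ is $G/H$ with the same isotropy $H$ at $\gamma(r)$, so by Lemma \ref{hominvvec} each $Z_i$ defines a $G$-invariant vector field on the orbit $G\gamma(r)$ whose value at $\gamma(r)$ is $Z_i^*(\gamma(r))$. Concretely, we define
$$Y_i(g\gamma(r))=\rD\varphi_g\big(Z_i^*(\gamma(r))\big).$$
This is well defined because $\Ad(H)$ fixes $Z_i$. In the product chart $(0,1)\times G/H$, $Y_i$ is just the $r$-independent invariant field $(0,\tilde Y_i)$ on $G/H$, and this shows that $Y_i$ is smooth. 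The $Y_i$ are vertical and $G$-invariant by construction. They are linearly independent at $\gamma(r)$ because $\psi:\fM\to T(G/H)$ is an isomorphism, and they are then linearly independent everywhere by invariance.

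\textbf{Step 3: expansion of vertical fields.} Let $v$ be vertical and $G$-invariant. For each fixed $r$, the restriction of $v$ to the orbit $G\gamma(r)$ is a $G$-invariant vector field on $G/H$. By Lemma \ref{hominvvec}, $\psi^{-1}(v(\gamma(r)))\in\fM_0$, so we can write $v(\gamma(r))=\sum_i v_i(r)Y_i(\gamma(r))$, and invariance extends this identity to all of $M_p$. For smoothness of the $v_i$, note that the map $r\mapsto v(\gamma(r))$ is smooth, and the frame $Y_i(\gamma(r))$ is smooth and linearly independent. The coefficients are therefore obtained by solving a linear system with smoothly varying, full-rank coefficients, for instance with Cramer's rule against a smooth Gram matrix. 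This gives smooth $v_i$.

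The only mildly delicate point is Step 2, namely the well-definedness and smoothness of $Y_i$. This rests on the triviality of the slice: the identification of $M_p$ with $(0,1)\times G/H$ from the slice theorem, using that $H$ acts trivially on the normal space at principal orbits.
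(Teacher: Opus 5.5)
For $M/G=[0,1]$ your argument is the paper's: transport a fixed basis of $\fM_0$ along $\gamma$ by the $G$-action, then split $u$ into horizontal and vertical parts. Your smoothness arguments for $h$ and the $v_i$ are more careful than the paper's.

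The gap is your opening claim $M_p\cong(0,1)\times G/H$, which fails when $M/G=S^1$. In that case $M_p=M$ is compact, and $(r,gH)\mapsto g\gamma(r)$ misses the orbit $G\gamma(0)=G\gamma(1)$. Write $\gamma(0)=a\gamma(1)$. Then $a\in N(H)$, $\Ad(a)$ preserves $\fM_0$, and $\rD\varphi_a\big(Z_i^*(\gamma(1))\big)=(\Ad(a)Z_i)^*(\gamma(0))$. So the two one-sided limits of your $Y_i$ on that orbit are the invariant fields determined by $Z_i$ and by $\Ad(a)Z_i$. Hence $Y_i$ is discontinuous there unless $\Ad(a)Z_i=Z_i$. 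Your Steps 2--3 therefore only give a frame on $M$ minus one orbit. The lemma, and its later use in the $S^1$ case of the proof of Theorem \ref{maintheorem}, ask for a global frame. The paper's remedy is an $r$-dependent basis: a smooth $\beta:(-\e,1+\e)\to(\fM_0)^{n_0}$ with $\beta_i(r)=\rD\varphi_a\beta_i(r+1)$ for $r\in(-\e,\e)$, spread over $M$ by the $G$-action.

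Be aware that this remedy, and with it the global claim, needs $\det(\Ad(a)|_{\fM_0})>0$. The reason is that $\beta(1)$ and $\beta(0)=\Ad(a)\beta(1)$ are joined by a continuous path of bases of $\fM_0$. Without this condition the global statement is false. For example, take $G=\SO(3)$ and $H=\{I,\mathrm{diag}(-1,-1,1)\}$, so $\fM_0$ is spanned by the infinitesimal rotation $E_3$ about the $z$-axis. Let $M$ be the mapping torus of right multiplication by $a=\mathrm{diag}(1,-1,-1)\in N(H)$ on $G/H$, with metric $\rD r^2$ plus the normal metric. Then $\Ad(a)E_3=-E_3$, and $a\gamma(r+1)=\gamma(r)$ along the extended horizontal geodesic. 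Any invariant vertical field is $x(r)E_3^*(\gamma(r))$ along $\gamma$, and invariance forces $x(r+1)=-x(r)$. So every such field vanishes on some orbit, and no nowhere-vanishing $Y_1$ exists on $M_p=M$. In the $S^1$ case, what your construction actually proves (a frame on $G\gamma((0,1))$) is what holds in general. A global frame requires the orientation condition above, or passing to the double cover, whose monodromy is $\Ad(a)^2$.
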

\begin{proof}
We will assume that $M_p$ is equipped with a $G$-invariant metric of the form \eqref{metricform}. If $M/G=[0,1]$, define $\{Y_i\}_{i=1}^{n_0}$ to be the $G$-invariant vector fields induced by a fixed basis of $\mathfrak{m}_0 \subseteq T_{\gamma(r)}(G\gamma(r))$. If $M/G=S^1$, then $M_p=M$, and we instead choose a smooth function $\beta:(-\varepsilon,1+\varepsilon)\to(\mathfrak{m}_0)^{n_0}$ such that $\{\beta_i(r)\}_{i=1}^{n_0}$ is a basis for $\mathfrak{m}_0$ for each $r$, and $\beta_i(r)=\mathrm{d}\varphi_a(\beta_i(r+1))$ for $r\in(-\varepsilon,\varepsilon)$. The vector fields $\{Y_i\}_{i=1}^{n_0}$ are then obtained by extending $\beta(r)$ along $\gamma$ to all of $M$, which is well-defined by the monodromy condition on $\beta$. The claim then follows since $\{Y_i\}$ forms a basis for $G$-invariant vector fields on each homogeneous space fibre $G/H\simeq G\gamma(r)$.

For the second claim, it is clear that any vector field of the form \eqref{vectorform} is $G$-invariant. Conversely, let $u\in\mathfrak{X}^G(M_p)$, and split $u(\gamma(r))=h(r)\gamma'(r)+v(r)$ into horizontal and vertical components. Since $u$ is $G$-invariant, we have $u(g\gamma(r))=h(r)\partial_r + \mathrm{d}\varphi_g v(r)$, and since $G$ acts by isometries, $\mathrm{d}\varphi_g v(r)\in T_{g\gamma(r)}(G\gamma(r))$, so $v\in\mathfrak{X}^G(G/H)$. 
Decomposing $u(\gamma(r))=h(r)\gamma'(r)+v(r)$ into horizontal and vertical parts, $G$-invariance of $u$ forces $\rD\varphi_g v(r)=v(g\gamma(r))$, so $v$ is $G$-invariant.
\end{proof}

Now, we compute the Levi-Civita connection and divergence of $G$-invariant vector fields.

\begin{lem}\label{LCu}
    The vector field $u$ from  \eqref{vectorform} satisfies 
    \begin{equation}\label{LCCH1}
        \nabla_{u}u=\left(hh'+\sg_r(S_r v,v)\right)\partial_r-h S_r(v)+h\nabla_{ \partial_r}v+\nabla_v^r v
    \end{equation}
    and
    \begin{equation}\label{divCH1}
        \nabla\cdot u=h'- \cH h,
    \end{equation}
    where $'$ denotes an $r$ derivative.
\end{lem}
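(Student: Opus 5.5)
To prove \eqref{LCCH1}, I will expand $\nabla_u u$ by bilinearity of the connection, with $u=h(r)\partial_r+v$ as in \eqref{vectorform}. Using the Leibniz rule and the fact that $h$ depends only on $r$, so that $\partial_r h=h'$ and $v(h)=0$ (because $v$ is vertical and $h$ is constant on orbits), I get
\begin{align*}
\nabla_u u&=h\nabla_{\partial_r}(h\partial_r)+h\nabla_{\partial_r}v+\nabla_v(h\partial_r)+\nabla_v v\\
&=hh'\partial_r+h^2\nabla_{\partial_r}\partial_r+h\nabla_{\partial_r}v+h\nabla_v\partial_r+\nabla_v v.
\end{align*}
The term $\nabla_{\partial_r}\partial_r$ vanishes because the integral curves of $\partial_r$ are the horizontal geodesics $g\gamma$. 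By Definition \ref{shapeoperatordef}, $\nabla_v\partial_r=-S_r v$. For the last term I will apply the splitting \eqref{splitLC} with $X=Y=v$, which gives $\nabla_v v=\sg(\nabla_v v,\partial_r)\partial_r+\nabla^r_v v$. Lemma \ref{shapeoperator} then identifies the horizontal coefficient as $\sg_r(S_r v,v)$. Collecting the terms yields \eqref{LCCH1}. I will leave $h\nabla_{\partial_r}v$ as it is, since the statement does not ask for it to be decomposed.

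For \eqref{divCH1}, I will compute the divergence in a local frame. At a point of $M_p$, I choose a $\sg$-orthonormal frame consisting of $\partial_r$ together with vertical vectors $E_1,\dots,E_n$ that are $\sg_r$-orthonormal. Then
$$\nabla\cdot u=\sg(\nabla_{\partial_r}u,\partial_r)+\sum_i\sg(\nabla_{E_i}u,E_i).$$

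For the horizontal part $h\partial_r$:
\begin{itemize}
\item The first sum contributes $h'$, since $\nabla_{\partial_r}\partial_r=0$.
\item The vertical terms contribute $h\sum_i\sg(\nabla_{E_i}\partial_r,E_i)=-h\sum_i\sg_r(S_rE_i,E_i)=-\cH h$, where the derivative of $h$ along $E_i$ is zero.
\end{itemize}

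For the vertical part $v$, I will show the divergence is zero:
\begin{itemize}
\item The term $\sg(\nabla_{\partial_r}v,\partial_r)=\partial_r\sg(v,\partial_r)-\sg(v,\nabla_{\partial_r}\partial_r)=0$, because $v\perp\partial_r$.
\item The remaining terms $\sum_i\sg(\nabla_{E_i}v,E_i)=\sum_i\sg_r(\nabla^r_{E_i}v,E_i)$, using \eqref{splitLC} again. This is the intrinsic divergence of $v$ on the homogeneous orbit $(G/H,\sg_r)$.
\item By Lemma \ref{vectorlem}, the restriction of $v$ to each orbit is $G$-invariant, so Lemma \ref{divhom} shows this divergence vanishes.
\end{itemize}
This gives $\nabla\cdot u=h'-\cH h$.

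The main point needing care is this vertical-divergence step. Restricting $v$ to an orbit gives a $G$-invariant field on the compact homogeneous space $G\gamma(r)\simeq G/H$ with the $G$-invariant metric $\sg_r$, so Lemma \ref{divhom} applies orbitwise. The only other subtle point is ensuring that derivatives of the $r$-dependent coefficients along vertical directions vanish, which holds because those coefficients are $G$-invariant functions.
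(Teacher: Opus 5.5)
Your proposal is correct. The computation of $\nabla_u u$ matches the paper's: expand by bilinearity, use $v(h)=0$ and $\nabla_{\partial_r}\partial_r=0$, then apply Definition~\ref{shapeoperatordef}, the splitting \eqref{splitLC} and Lemma~\ref{shapeoperator}.

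The divergence formula \eqref{divCH1} is where your route differs. The paper does not compute it; it simply cites Lemmas B.3 and B.4 of B\"ohm--Lafuente. You derive it directly in an adapted orthonormal frame $\{\partial_r,E_1,\dots,E_n\}$:
\begin{itemize}
\item the $\partial_r$ slot gives $h'$;
\item the vertical slots give $-h\,\mathrm{tr}\,S_r=-\cH h$;
\item the vertical part $v$ reduces, via \eqref{splitLC}, to the intrinsic $\sg_r$-divergence of a $G$-invariant field on the compact orbit $G\gamma(r)\simeq G/H$, which vanishes by Lemma~\ref{divhom}.
\end{itemize}
This makes the proof self-contained, using only material already in the paper. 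It also proves explicitly the fact the paper uses informally just before Lemma~\ref{BCs}: vertical $G$-invariant fields are automatically divergence-free. Finally, it obtains \eqref{divCH1} directly from the definition of $\cH$ as the trace of $S_r$, so you never have to reconcile sign or normalization conventions with the external reference. The paper's citation is shorter, but it leaves that convention-matching to the reader.
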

\begin{proof}
For the Levi-Civita connection, we have
\begin{align*}
    \nabla_u u&=\nabla_{h\partial_r}(h\partial_r)+\nabla_v(h\partial_r)+\nabla_{h\partial_r}v+\nabla_vv\\
    &=h(\partial_r h)\partial_r+h^2\underbrace{\nabla_{\partial_r}\partial_r}_{=0}+\underbrace{(vh)}_{=0}\partial_r+h\nabla_v\partial_r+h\nabla_{\partial_r}v+\nabla_vv\\
    &=hh'\partial_r-hS_r v+h \nabla_{\partial_r}v+\sg_r(S_r v,v)\partial_r+\nabla^r_vv.
\end{align*}
In this computation, we used Definition \ref{shapeoperatordef}, Lemma \ref{shapeoperator}, the fact that $\gamma$ is a geodesic and $G$ acts by isometries. Equation \eqref{divCH1} follows from Lemmas B.3 and B.4 of \cite{BohmLafuente23}. 
\end{proof}

\subsection{Smoothness of invariant vector fields near nonprincipal orbits}\label{singPrelims}

So far, we have discussed $G$-invariant vector fields on the principal part $M_p$ of $M$. Extending this theory to nonprincipal orbits (which only occur at boundary points of $M/G$) is more complicated, since we require the vector field to be invariant under a larger isotropy group at the nonprinicpal orbit while simultaneously being smooth at the nonprincipal orbit. In this subsection, we use the slice theorem to describe how to smoothly extend vector fields that are invariant on $M_p$ to be invariant on the nonprincipal orbit.

The slice theorem states that the local structure of a cohomogeneity one manifold near a nonprincipal orbit is the same as the manifold $G\times_{K}V$, where $K$ is the isotropy at the nonprincipal orbit and acts irreducibly and linearly on the Euclidean vector space $V$ and transitively on all concentric spheres centered at the origin. A $G$-invariant vector field $u$ on $G\times_K V$ is determined by its value at points $[(e,v)]$, $v\in V$. Letting $\mathfrak{n}$ be an $\Ad(K)$-invariant complement of $\mathfrak{k}=\mathrm{Lie}(K)$ in $\fG$, the slice theorem also implies that $T_{[(e,v)]}(G\times_K V)\simeq V\oplus\mathfrak{n}$. It follows that $u$ is determined by the $K$-equivariant map $W:V\to V\oplus \mathfrak{n}$, defined so that $W(v)$ is the value of $u([e,v])$ under the identification $T_{[(e,v)]}(G\times_K V)\simeq V\oplus\mathfrak{n}$.

The following lemma allows us to express the map $W$ as a combination of \emph{finitely many} homogeneous polynomials (that is, maps $P:V\to V\oplus\mathfrak{n}$ of the form $P(v)=Q(v,\dots,v)$, where $Q:V^{d}\to V\oplus \mathfrak{n}$ is a symmetric multilinear map, and $d$ is some nonnegative integer known as the \emph{degree} of $P$).
\begin{lem}\label{EWSmoothness}\
    \begin{enumerate}[(i)]
        \item There are finitely many homogeneous and $K$-equivariant polynomials \\ $W_0,\dots,W_N:V\to V\oplus \mathfrak{n}$ such that any smooth $G$-invariant vector field $u$ on $G\times_K V$ evaluated at points $[e,v]$, $v\in V$ is of the form
        \begin{equation}\label{polyform}
            u([e,v])=\sum_{i=0}^N u_i(|v|^2) W_i(v),
        \end{equation}
        where $u_i:V \to \mathbb{R}$ are smooth functions of only $|v|^2$. Moreover, all such vector fields of the form \eqref{polyform} are smooth and $G$-invariant.
        \item The polynomials $W_i$ in \eqref{polyform} can always be arranged such that $W_0$ generates the horizontal vector field $r\partial_r$ and $W_1,\dots,W_N$ generate vertical vector fields on $G\times_{K}V$.
        \item If $W_i$ is a degree $0$ polynomial in \eqref{polyform}, then $\mathrm{im}(W_i)\subseteq\{0\}\oplus\mathfrak{n}_0$, where $\mathfrak{n}_0$ is the largest subspace of $\mathfrak{n}$ on which $\Ad(K)$ acts trivially.
    \end{enumerate}  
\end{lem}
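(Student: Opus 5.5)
The plan is to reduce the statement to classical invariant theory for the compact group $K$ acting on $V$. First, by $G$-invariance a smooth invariant vector field $u$ on $G\times_K V$ is determined by the map $W:V\to V\oplus\mathfrak{n}$. I will check that $W$ is smooth and $K$-equivariant, and conversely that every smooth $K$-equivariant $W$ produces a smooth $G$-invariant field. Smoothness of the converse comes from the fact that $G\times V\to G\times_K V$ is a principal $K$-bundle. The field $(g,v)\mapsto (\rD L_g)(W(v))$ on $G\times V$ is $K$-equivariant and pushes down. This identifies the space of smooth invariant fields with the module of smooth $K$-equivariant maps $C^\infty(V,V\oplus\mathfrak{n})^K$, where $K$ acts on $\mathfrak{n}$ by $\Ad$.

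For (i), I will invoke the equivariant version of the Schwarz theorem (Schwarz 1975; Po\'enaru). The ring of polynomial $K$-invariants on $V$ is finitely generated (Hilbert). The module of polynomial equivariants $V\to V\oplus\mathfrak{n}$ is finitely generated over it. Every smooth equivariant is a combination of a finite set of homogeneous generators $W_0,\dots,W_N$ with smooth invariant coefficients. Since $K$ acts transitively on spheres in $V$, every invariant function is a function of $|v|^2$ alone. By Schwarz's theorem, smooth invariants are then exactly the functions of the form $f(|v|^2)$ with $f$ smooth; one can also see this directly via Whitney's theorem on smooth even functions. This gives \eqref{polyform}. The converse direction is immediate: such expressions are smooth and equivariant. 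I expect this step to be the main obstacle. The smooth (as opposed to formal or polynomial) division statement is genuinely nontrivial, so I would cite it rather than reprove it.

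For (ii), I will split $V\oplus\mathfrak{n}$ into its two $K$-invariant summands and take generators separately for each component. Equivariant maps $V\to V$ commute with $K$, and the invariants are functions of $|v|^2$. I will argue that any equivariant $P:V\to V$ is a smooth function of $|v|^2$ times $v$. At a point $v\ne0$, $P(v)$ is fixed by the stabiliser $K_v$. Transitivity on spheres forces the $K_v$-fixed vectors in $V$ to be spanned by $v$ (when $\dim V>1$), or the statement is trivial when $\dim V=1$. Hence $P(v)=\phi(v)v$ with $\phi$ invariant, and smoothness of $\phi(|v|^2)$ follows by a division argument or from the Schwarz theorem applied to the generator $v$. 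The map $v\mapsto v$ generates $r\partial_r$, because the radial vector at $[e,v]$ has length $|v|=r$. Generators for the $\mathfrak{n}$-component give vertical fields, since $\mathfrak{n}$ corresponds to orbit directions.

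For (iii), note that a degree-$0$ equivariant is a constant vector fixed by $K$. The $V$-part of such a vector must vanish, since $K$ acts on $V$ without nonzero fixed vectors (transitivity on spheres, $\dim V\ge1$). The $\mathfrak{n}$-part must lie in $\mathfrak{n}_0$ by definition. In the exceptional case $\dim V=1$ with $K$ acting by $\pm1$, the same conclusion holds because the action is nontrivial.
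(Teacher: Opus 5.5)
Your route to (i) through the theorems of Schwarz and Po\'enaru is sound. The paper instead restricts equivariant maps to the unit sphere, uses finite-dimensionality and a filtration by degree, and defers to Eschenburg--Wang. Your argument for (iii) is the paper's. The genuine gap is in (ii). You claim that transitivity of $K$ on spheres forces the $K_v$-fixed vectors in $V$ to be $\mathbb{R}v$ when $\dim V>1$, and hence that every $K$-equivariant $P:V\to V$ has the form $\phi(|v|^2)v$. This is false.

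For $K=\SO(2)$ acting on $V=\mathbb{R}^2$, the stabiliser $K_v$ is trivial, and the quarter-turn $v\mapsto Jv$ is an equivariant linear map that is not radial. Likewise, $\mathrm{U}(m)$ on $\mathbb{C}^m$ has $K_v\cong\mathrm{U}(m-1)$ fixing the whole complex line $\mathbb{C}v$, so $v\mapsto iv$ is equivariant. And $\mathrm{Sp}(m)$ on $\mathbb{H}^m$ admits right multiplication by imaginary quaternions. Your claim does hold when $\dim V$ is odd: a nonzero $w\in V^{K_v}\cap v^{\perp}$ would give a nowhere-vanishing invariant tangent field $kv\mapsto kw$ on an even-dimensional sphere. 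It fails in general.

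These slice representations are ubiquitous. For $\SO(2)$ rotating $S^2$ one has $\mathfrak{n}=0$, so your generating set would be $\{v\}$ alone. Every invariant field near a pole would then be a multiple of $r\partial_r$, contradicting the invariant rotation field $\partial_\theta$, which corresponds to $W(v)=Jv$. The generators your argument discards are exactly the vertical fields coming from the $V$-summand, which are what the rest of the paper is about.

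The repair, which is the paper's argument, is to split off the radial part rather than prove there is nothing else. Let $P$ be one of the homogeneous $V$-valued generators from (i), of degree $d$. Since $K$ acts orthogonally on $V$, $v\mapsto\langle P(v),v\rangle$ is a $K$-invariant homogeneous polynomial of degree $d+1$. It is constant on the unit sphere, hence equal to $c|v|^{d+1}$ for a constant $c$. If $d$ is even then $d+1$ is odd and $c=0$, because $-v\in Kv$. If $d$ is odd, write $c=\alpha$.

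Thus $P$ (when $d$ is even), or $P-\alpha|v|^{d-1}v$ (when $d$ is odd, a polynomial since $d-1$ is even), is an equivariant polynomial tangent to the spheres of $V$, i.e.\ vertical. Replace each such $P$ by this tangential part and adjoin $v\mapsto(v,0)$, which gives $r\partial_r$. This leaves the span over smooth functions of $|v|^2$ unchanged and yields (ii).
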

\begin{proof}\
\begin{enumerate}[(i)]
    \item The proof is analogous to the proof of \cite[Lemma 1.1]{EschenburgWang}, except that the $K$-equivariant maps are into $V\oplus\mathfrak{n}$ instead of $\mathrm{Sym}^2(V\oplus\mathfrak{n})$. In particular, to see that we can choose finitely many generators $W_0,\dots,W_N$, let $\mathcal{W}$ be the space of all $K$-equivariant maps from the unit sphere in $V$ to $V\oplus\mathfrak{n}$. The evaluation map then gives an isomorphism $\mathcal{W}\simeq (V\oplus\mathfrak{n})^K$, so $\mathcal{W}$ is finite-dimensional. Define $\mathcal{W}_p$ to be the subspace of all maps that are restrictions to the unit sphere of $K$-equivariant homogeneous polynomials $V\to V\oplus\mathfrak{n}$ of degree $p$, and $\mathcal{W}^m=\sum_{p=0}^m \mathcal{W}_p$. By polynomial approximation, $\bigcup_m \mathcal{W}^m$ is dense in $\mathcal{W}$, and since $\mathcal{W}$ is finite-dimensional we get $\mathcal{W}=\mathcal{W}^{m_0}$ for some $m_0$. We can then choose a basis $W_0,\dots,W_N$ adapted to the filtration
    $$\mathcal{W}^0\subseteq \mathcal{W}^1\subseteq\cdots \subseteq \mathcal{W}^{m_0}=\mathcal{W}^{m_0+1}=\cdots=\mathcal{W}.$$
    \item First, observe that the $K$-equivariant homogeneous degree one polynomial $W:V\to V\oplus\mathfrak{n}$, $v\mapsto (v,0)$, corresponds to the vector field $r\partial_r$. Next, since $V$ and $\mathfrak{n}$ are both $K$-invariant subspaces of $V\oplus\mathfrak{n}$, the $K$-equivariant homogeneous polynomial $W_i$ in \eqref{polyform} splits as
    $$W_i=W_i^V+W_i^\mathfrak{n},$$
    where $W_i^V:V\to V$ and $W_i^\mathfrak{n}:V\to\mathfrak{n}$ are themselves $K$-equivariant homogeneous polynomials. Vector fields corresponding to the polynomials $W_i^\mathfrak{n}$ are vertical since we can identify $\mathfrak{n}$ with the tangent space to the nonprincipal orbit at $\gamma(0)$. We now split into two cases based on the degree $d_i$ of $W_i^V$.
    \begin{itemize}
        \item If $d_i$ is even, then the $K$-invariant homogeneous polynomial $V\to\R$, $v\mapsto \ip{W_i^V(v)}{v}$, has degree $d_i+1$, which is odd. However, all such polynomials are zero.
        It follows that $\ip{W_i^V(v)}{v}=0$ for all $v\in V$, so $W_i^V$ is vertical (i.e., tangent to spheres in $V$). Overall, $W_i$ is vertical.
        \item If $d_i$ is odd, then the $K$-invariant scalar polynomial $v\mapsto \ip{W_i^V(v)}{v}$ has even degree $d_i+1$, and so must satisfy ${v}=\alpha_i|v|^{d_i+1}$ for some $\alpha_i\in\R$. The $K$-equivariant homogeneous polynomial $W_i'=W_i-\alpha_i|v|^{d_i-1} W$ is vertical, since
        $$\ip{(W_i')^V(v)}{v}=\ip{W_i^V(v)-\alpha_i|v|^{d_i-1}v}{v}=\alpha_i|v|^{d_i+1}-\alpha_i|v|^{d_i-1}\ip{v}{v}=0.$$
        Adding $W$ and replacing $W_i$ with $W_i'$ in the finite collection of generators in part (i) does not affect the span, since $W_i'$ is given by a linear combination of $W_i$ and $W$ with coefficients given by smooth functions of $|v|^2$.
    \end{itemize}
    Applying this procedure to each $W_i$ yields a collection of vertical vector fields.
    \item If $W_i$ is a homogeneous degree zero polynomial $V\to V\oplus\mathfrak{n}$, then $W_i$ is a constant map $v\mapsto w_i\in V\oplus\mathfrak{n}$. Since $W_i$ is also $K$-equivariant, we must have that $K$ acts trivially on $w_i$. In particular, since $K$ acts transitively on spheres in $V$, the only element of $V$ that is fixed by all of $K$ is $0$. Thus, $w_i\in\{0\}\oplus\mathfrak{n}_0$.
\end{enumerate}
\end{proof}
\begin{cor}\label{Christoffel}
    If $\mathsf{g}$ is a $G$-invariant Riemannian metric on $M$, then $\nabla_{W_i}W_j=\sum_{k=0}^{N}c_{ij}^{k}W_k$, where $c_{ij}^k:V\to\R$ are smooth functions of $|v|^2$. 
\end{cor}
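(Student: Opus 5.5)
The plan is to show that $\nabla_{W_i}W_j$ is itself a smooth $G$-invariant vector field on $G\times_K V$, and then apply Lemma \ref{EWSmoothness}(i) to it.

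First, each $W_i$ determines, through \eqref{polyform} with $u_i\equiv 1$, a smooth $G$-invariant vector field on $G\times_K V$. I will denote this vector field also by $W_i$.

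Second, let $X$ and $Y$ be smooth $G$-invariant vector fields and let $\mathsf{g}$ be $G$-invariant. Then $\nabla_X Y$ is smooth, because the Levi-Civita connection of a smooth metric applied to smooth fields is smooth. It is also $G$-invariant, because each $\varphi_g$ is an isometry and isometries preserve the Levi-Civita connection: for every $g\in G$,
$$\rD\varphi_g(\nabla_X Y)=\nabla_{\rD\varphi_g X}\rD\varphi_g Y=\nabla_X Y\circ\varphi_g .$$
This is the same invariance of geometric operations under isometries already used in Proposition \ref{Eulergeometric}.

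Third, since $\nabla_{W_i}W_j$ is a smooth $G$-invariant vector field on $G\times_K V$, Lemma \ref{EWSmoothness}(i) gives smooth functions $c_{ij}^k$ of $|v|^2$ such that
$$(\nabla_{W_i}W_j)([e,v])=\sum_{k=0}^N c_{ij}^k(|v|^2)\,W_k(v).$$
By $G$-invariance, this identity on the slice $\{[e,v]: v\in V\}$ extends to all of $G\times_K V$. I will extend the functions $c_{ij}^k$ to $G\times_K V$ as $G$-invariant functions of $|v|^2$, which is well-defined because $K$ preserves $|v|$.

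The substantive content is therefore carried entirely by Lemma \ref{EWSmoothness}(i), which is assumed. The only step that needs care is the check that a covariant derivative of invariant fields is again a smooth invariant field, and that check is routine.
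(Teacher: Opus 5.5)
Your proposal is correct and follows the paper's proof exactly: the paper also notes that $\nabla_{W_i}W_j$ is a smooth $G$-invariant vector field because $\mathsf{g}$ and the $W_i$ are $G$-invariant, and then applies part (i) of Lemma \ref{EWSmoothness}. Your check that isometries preserve the Levi-Civita connection simply spells out the invariance step, which the paper states without detail.
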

\begin{proof}
    If $\sg$ is a smooth $G$-invariant metric and $W_i,W_j$ are smooth $G$-invariant vector fields, then $\nabla_{W_i}W_j$ is a smooth $G$-invariant vector field. Applying part (i) of Lemma \ref{EWSmoothness} gives the result.
\end{proof}

The slice theorem also states that the map $\Phi:V \oplus \mathfrak{n}\to G\times_{K}V$ with $(v,x)\mapsto [(\text{exp}(x),v)]$ is a $K$-equivariant local diffeomorphism at the origin $(0,0)\in V\oplus \mathfrak{n}$. It induces coordinates on $G\times_{K}V$ near a point on a nonprincipal orbit, known as \emph{slice coordinates}, and using slice coordinates allows us to think of vector fields as functions $X: V\oplus \mathfrak{n}\to V\oplus \mathfrak{n}$. The following lemma relates the homogeneous polynomials in Lemma \ref{EWSmoothness} to their corresponding vector fields $V\oplus\mathfrak{n}\to V\oplus\mathfrak{n}$ in slice coordinates.

\begin{lem}\label{vecfielddegree}
    If $W:V\to V\oplus \mathfrak{n}$ is a $K$-equivariant homogeneous polynomial of degree $k$, then the corresponding $G$-invariant smooth vector field $\hat{W}:V\oplus \mathfrak{n}\to V\oplus \mathfrak{n}$ in slice coordinates $(v,x)$ satisfies
    $$\hat{W}(v,x)=W(v)+O(|x|)\cdot O(|v|^k)$$
    in a neighborhood of the origin. 
\end{lem}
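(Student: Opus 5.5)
The approach is to write $\hat W$ explicitly by pushing $W$ forward along the group action, and then to estimate the $x$-dependence by a Taylor expansion in $x$ that keeps track of the homogeneity in $v$.

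\textbf{Step 1: formula for $\hat W$.} By $G$-invariance, the vector field at $\Phi(v,x)=[(\exp(x),v)]$ is
\[
\hat W(v,x)=\rD(\Phi^{-1})\,\rD\varphi_{\exp(x)}\big(W(v)\big),
\]
where $W(v)$ is viewed as a tangent vector at $[(e,v)]$ via $T_{[(e,v)]}(G\times_K V)\simeq V\oplus\mathfrak n$. Set
\[
A(v,x):=\rD(\Phi^{-1})_{\Phi(v,x)}\circ \rD\varphi_{\exp(x)}\circ \rD\Phi_{(v,0)} ,
\]
a linear endomorphism of $V\oplus\mathfrak n$ depending smoothly on $(v,x)$ near the origin. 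Then $\hat W(v,x)=A(v,x)\,\widetilde W(v)$, where $\widetilde W(v)=(\rD\Phi_{(v,0)})^{-1}W(v)$.

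\textbf{Step 2: the case $x=0$.} At $x=0$ we have $\varphi_e=\mathrm{id}$, so $A(v,0)=\mathrm{id}$. I would check that $\rD\Phi_{(v,0)}$ realises exactly the identification $T_{[(e,v)]}\simeq V\oplus\mathfrak n$ used in Lemma~\ref{EWSmoothness}. If the paper's identification differs at $v\neq 0$, I would instead define the identification through $\rD\Phi_{(v,0)}$ and absorb the discrepancy, which is consistent with how $W$ was defined. This gives $\widetilde W=W$, and hence $\hat W(v,0)=W(v)$.

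\textbf{Step 3: the error term.} Since $A$ is smooth with $A(v,0)=\mathrm{id}$, Taylor's theorem in $x$ gives $A(v,x)=\mathrm{id}+B(v,x)$ with $\|B(v,x)\|\le C|x|$ uniformly on a neighbourhood of the origin. Therefore
\[
\hat W(v,x)-W(v)=B(v,x)W(v),\qquad \|B(v,x)W(v)\|\le C|x|\,|W(v)|\le C'|x|\,|v|^k,
\]
because $W$ is homogeneous of degree $k$. This is precisely the claimed $O(|x|)\cdot O(|v|^k)$ bound.

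\textbf{Main obstacle.} The delicate point is Step 2: making sure that the identification of $T_{[(e,v)]}$ with $V\oplus\mathfrak n$ used for $W$ is the one induced by $\rD\Phi$ along the slice $\{x=0\}$. One also needs uniform smoothness of $A$ down to $v=0$, which holds because $\Phi$ is a diffeomorphism on a whole neighbourhood of $(0,0)$ and is not merely regular away from the singular orbit. If the identification is the naive one, $(v',y)\mapsto \frac{d}{ds}[(\exp(sy),v+sv')]$, then it agrees with $\rD\Phi_{(v,0)}$ exactly, since $\Phi(v+sv',sy)=[(\exp(sy),v+sv')]$; so I expect this step to go through.
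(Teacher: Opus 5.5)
Your proposal is correct and follows the paper's proof essentially verbatim. The paper also writes $\hat W(v,x)=A(v,x)W(v)$ with the same operator $A(v,x)=\rD\Phi^{-1}_{(v,x)}\circ\rD(\varphi_{\exp(x)})_{[(e,v)]}\circ\rD\Phi_{(v,0)}$, uses $A(v,0)=\id$, and bounds $(A-\id)W(v)$ by $O(|x|)\cdot O(|v|^k)$ via homogeneity of $W$; your extra check in Step 2 that the identification $T_{[(e,v)]}\simeq V\oplus\mathfrak{n}$ is the one induced by $\rD\Phi_{(v,0)}$ is something the paper simply builds into its definition $\tilde W([(e,v)])=\rD\Phi_{(v,0)}W(v)$.
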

\begin{proof}
    The corresponding vector field $\tilde{W}$ on $G\times_K V$ is given at slice points $[(e,v)]\in G\times_K V$ by
    $$\tilde{W}([(e,v)])=\rD\Phi_{(v,0)}W(v).$$
    At other points $[(g,v)]$, we act by $G$ to obtain
    $$\tilde{W}([(g,v)])=\rD(\varphi_g)_{[(e,v)]} \tilde{W}([(e,v)]).$$
    This is well-defined since $W$ and $\Phi(\cdot,0)$ are $K$-equivariant (so $\rD\Phi_{(v,0)}$ is also $K$-equivariant). By the slice theorem, all elements near the nonprincipal orbit are of the form $[(g,v)]=[(\exp(x),v)]$ for some $x\in\mathfrak{n}$. In this case, we have
    $$\tilde{W}([(g,v)])=\rD(\varphi_{\exp(x)})_{[(e,v)]} \circ \rD\Phi_{(v,0)}W(v).$$
    Pulling back to our slice coordinates $(v,x)$ on $V\oplus \mathfrak{n}$ through $\Phi$ gives
    $$\hat{W}(v,x)=A(v,x)W(v),$$
    where $A(v,x)=\rD\Phi_{(v,x)}^{-1} \circ \rD(\varphi_{\exp(x)})_{[(e,v)]} \circ \rD\Phi_{(v,0)}$ is linear and smooth in $(v,x)$. Moreover, when $x=0$, we have $A(v,0)=\id_{V\oplus\mathfrak{n}}$. Taylor expanding $X$ at $x=0$ thus gives
    $$\hat{W}(v,x)=W(v)+C(v,x)W(v),$$
    where $C(v,x)=A(v,x)-\id_{V\oplus\mathfrak{n}}:V\oplus\mathfrak{n}\to V\oplus\mathfrak{n}$ is a linear operator satisfying $C(v,0)=0$ and $\|C(v,x)\|_{\mathcal{L}(V\oplus \mathfrak{n})}=O(|x|)$. Since $W$ is homogeneous of degree $k$, we have that $W(v)=O(|v|^k)$, which implies that $C(v,x)W(v)=O(|x|)\cdot O(|v|^k)$. 
\end{proof}

It is also helpful to identify how the polynomial structure changes when taking connections. For a given $K$-equivariant map $X:V\to V\oplus \mathfrak{n}$, we let $X^{(k)}$ denote the $k^\text{th}$ degree homogeneous polynomial in the Taylor expansion. For the corresponding invariant vector field $\hat{X}:V\oplus\mathfrak{n}\to V\oplus \mathfrak{n}$, we define $\hat{X}^{(k)}=\widehat{X^{(k)}}$.

\begin{lem}\label{ivg}
    Let $Z,W:V\to V\oplus\mathfrak{n}$ be $K$-equivariant homogeneous polynomials of degree $0$ and $k$ respectively. If $\hat{Z}$ and $\hat{W}$ are the corresponding vector fields in slice coordinates, then 
    \begin{align*}
        (\nabla_{\hat{Z}}\hat{W})^{(\ell)}=(\nabla_{\hat{W}}\hat{Z})^{(\ell)}=0 
    \end{align*}
    for $\ell=0,\dots,k-1$.
\end{lem}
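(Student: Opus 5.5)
We work in the slice coordinates $(v,x)\in V\oplus\mathfrak{n}$ of Lemma \ref{vecfielddegree}. There the Levi-Civita connection has the local form
$$\nabla_{\hat X}\hat Y=\rD\hat Y(\hat X)+\Gamma(\hat X,\hat Y),$$
where $\Gamma_{(v,x)}$ is a smooth, bilinear Christoffel term. By definition, the quantity $(\nabla_{\hat Z}\hat W)^{(\ell)}$ is the $\ell^{\text{th}}$ Taylor coefficient of the $K$-equivariant map $V\to V\oplus\mathfrak{n}$ obtained by evaluating the smooth $G$-invariant field $\nabla_{\hat Z}\hat W$ at the slice points $x=0$. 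So it suffices to show the following: both $v\mapsto(\nabla_{\hat Z}\hat W)(v,0)$ and $v\mapsto(\nabla_{\hat W}\hat Z)(v,0)$ are smooth maps of the form $B(v)W(v)$, where $B(v)$ is a linear operator depending smoothly on $v$. Once we have this, expanding $B$ in a Taylor series and using that $W$ is homogeneous of degree $k$ shows that every homogeneous component of degree $\ell<k$ vanishes.

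\textbf{Step 1: write the two fields with an explicit factor of $W(v)$.} From the proof of Lemma \ref{vecfielddegree}, we have $\hat W(v,x)=A(v,x)W(v)$ with $A$ smooth and $A(v,0)=\id$. By Lemma \ref{EWSmoothness}(iii), $Z$ is constant, equal to $(0,z)$ with $z\in\mathfrak{n}_0$. Hence $\hat Z(v,x)=A(v,x)(0,z)$ and $\hat Z(v,0)=(0,z)$. In particular, the field $\hat Z$ has no $V$-component along the slice $x=0$.

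\textbf{Step 2: the term $\nabla_{\hat Z}\hat W$.} At $(v,0)$ the derivative term is
$$\rD\hat W_{(v,0)}(0,z)=\partial_x\big(A(v,x)W(v)\big)\big|_{x=0}[z]=\big(\partial_xA(v,0)[z]\big)W(v).$$
Only an $x$-derivative appears, because $\hat Z$ has no $V$-component at $x=0$. The Christoffel term is $\Gamma_{(v,0)}((0,z),W(v))$, which is linear in $W(v)$. Both pieces are therefore of the form $B(v)W(v)$ with $B$ smooth.

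\textbf{Step 3: the term $\nabla_{\hat W}\hat Z$.} At $(v,0)$ we have $\rD\hat Z_{(v,0)}(W(v))+\Gamma_{(v,0)}(W(v),(0,z))$. Both terms are linear in $W(v)$ with smooth coefficients.

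\textbf{Step 4: the Taylor argument.} Taylor expand $B(v)=\sum_j B_j(v)$, where each $B_j$ is homogeneous of degree $j$. Then the degree-$\ell$ component of $B(v)W(v)$ is $B_{\ell-k}(v)W(v)$, and this vanishes when $\ell<k$. This gives $(\cdot)^{(\ell)}=0$ for $\ell=0,\dots,k-1$.

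The main obstacle is Step 2. There we must make sure that differentiating $\hat W$ does not destroy the factor $W(v)$. If $\hat Z$ had a nonzero $V$-component, then a $v$-derivative of $W$ would appear and lower the degree by one. This is exactly why we need the fact, from Lemma \ref{EWSmoothness}(iii), that a degree-$0$ generator lies in $\{0\}\oplus\mathfrak{n}_0$. We also rely on writing $\hat W=A\,W$ as an exact product with smooth $A$, rather than using only the $O(|x|)\cdot O(|v|^k)$ estimate, so that $x$-derivatives preserve the factor $W(v)$.
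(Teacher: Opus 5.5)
Your proposal is correct and follows essentially the same route as the paper: work in slice coordinates at $x=0$, split $\nabla$ into a directional-derivative term and a Christoffel term, and use Lemma \ref{EWSmoothness}(iii) to see that $\hat Z(v,0)=(0,z)$ has no $V$-component, so only an $x$-derivative of $\hat W$ appears and every term is of order $|v|^k$. The one difference is that you keep the exact factorisation $\hat W(v,x)=A(v,x)W(v)$ from the proof of Lemma \ref{vecfielddegree} rather than the estimate $O(|x|)\cdot O(|v|^k)$. That factorisation is what actually justifies the paper's step $\partial_j\hat W^i(v,0)=O(|v|^k)$, since a bare big-$O$ bound would not survive differentiation.
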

\begin{proof}
    Let $(v^i)_{i=1}^\alpha$ be linear coordinates on $V$ and $(x^i)_{i=1}^\beta$ be linear coordinates on $\mathfrak{n}$. We then put slice coordinates $(y^i)_{i=1}^{\alpha+\beta}$ on $V\oplus\mathfrak{n}$ with $y^i=v^i$ for $i\leq\alpha$ and $y^i=x^{i-\alpha}$ for $i>\alpha$.
    Since $\sg$ is $G$-invariant, $\nabla_{\hat{Z}}\hat{W}$ and $\nabla_{\hat{W}}\hat{Z}$ are $G$-invariant vector fields, so they are determined by their values on the slice $x=0$. Lemma \ref{vecfielddegree} gives $\hat{Z}(v,0)=Z(v)$ and $\hat{W}(v,0)=W(v)$ on the slice, so in slice coordinates $(y^i)$ we have
    \begin{align*}
        \left.(\nabla_{\hat{Z}}\hat{W})^i\right|_{x=0} &= Z^j(v)\partial_j \hat{W}^i(v,0) + \Gamma^i_{jm}(v,0)Z^j(v)W^m(v), \\
        \left.(\nabla_{\hat{W}}\hat{Z})^i\right|_{x=0} &= W^j(v)\partial_j \hat{Z}^i(v,0) + \Gamma^i_{jm}(v,0)W^j(v)Z^m(v).
    \end{align*}
    In both expressions, the second term is $O(|v|^k)$, since $\Gamma^i_{jm}$ is smooth and bounded, $Z^j(v)$ and $Z^m(v)$ are $O(1)$, and $W^j(v)$ and $W^m(v)$ are $O(|v|^k)$.

    For the first term of $(\nabla_{\hat{Z}}\hat{W})^i|_{x=0}$, we split over coordinate directions. For $j\leq\alpha$, by part (iii) of Lemma \ref{EWSmoothness}, $Z(v)\in\{0\}\oplus\mathfrak{n}_0$, so $Z^j(v)=0$ (and the first term is zero) for all $j\leq\alpha$. For $j>\alpha$, Lemma \ref{vecfielddegree} gives us that $\hat{W}(v,x)=W(v)+O(|x|)O(|v|^k)$, so $\partial_j \hat{W}^i(v,0)=O(|v|^k)$. Since $Z(v)=O(1)$, the first term is $O(|v|^k)$.

    For the first term of $(\nabla_{\hat{W}}\hat{Z})^i|_{x=0}$, by Lemma \ref{vecfielddegree}, we have $\hat{Z}(v,x)=Z(v)+O(|x|)$. Therefore, $\partial_j\hat{Z}^i(v,0)=O(1)$. Since $W(v)=O(|v|^k)$, the first term is $O(|v|^k)$.

    Combining, $(\nabla_{\hat{Z}}\hat{W})^i|_{x=0}=O(|v|^k)$ and $(\nabla_{\hat{W}}\hat{Z})^i|_{x=0}=O(|v|^k)$, giving $(\nabla_{\hat{Z}}\hat{W})^{(\ell)}=(\nabla_{\hat{W}}\hat{Z})^{(\ell)}=0$ for all $\ell=0,\dots,k-1$.
\end{proof}

\begin{rmk}
    If $Z$ is instead a degree $p\ge 1$ polynomial, then a similar argument shows that both $\nabla_{\hat{Z}}\hat{W}$ and $\nabla_{\hat{W}}\hat{Z}$ are $O(|v|^{k+p-1})$ on the slice. That is, we have $(\nabla_{\hat{Z}}\hat{W})^{(\ell)}=(\nabla_{\hat{W}}\hat{Z})^{(\ell)}=0$ for $\ell=0,\dots,k+p-2$.
\end{rmk}

\section{Solutions of the Euler fluid equations}

In this section, we analyse highly symmetric solutions to the Euler fluid equation \eqref{Euler}--\eqref{BCs0} and prove Theorem \ref{maintheorem} using the blow-up criterion of Theorem \ref{Eulerwellposed}. First, we consider the problem on homogeneous manifolds, in which case the space of invariant vector fields is finite-dimensional, so global regularity follows from conservation of energy. Next, we consider the problem on cohomogeneity one manifolds with nonprincipal orbits (i.e., when $M/G=[0,1]$). In this case, we show that any divergence-free vector field is necessarily vertical, from which we deduce a certain ``decoupling'' of the Euler fluid equations into problems on the individual orbits; regularity then follows from local analysis. Finally, we consider cohomogeneity one manifolds with $M/G=S^1$. While the equations no longer ``decouple'', the fact that all orbits are principal allows us to directly bound solutions in $C^1$ using the conservation of energy and the scalar maximum principle.

\subsection{Homogeneous manifolds}

Let $\sg$ be a $G$-invariant metric on the homogeneous space $G/H$ with $G$ compact. By Lemma \ref{divhom}, if $u\in\fX^G(G/H)$, then $u$ automatically satisfies $\nabla\cdot u=0$. Also, if $p:G/H\to\R$ is $G$-invariant, then it is constant on $G/H$ and $\nabla p=0$. Therefore, the Euler equations \eqref{Euler} for a $G$-invariant vector field $u$ and $G$-invariant pressure $p$ simplify to the single equation
\begin{equation}\label{Eulerhom}
    \frac{du}{dt}=-\nabla_u u.
\end{equation}

It is easy to show that smooth solutions to \eqref{Eulerhom} can be extended indefinitely in time.

\begin{prop}\label{noblow-upHom}
    For any smooth $G$-invariant vector field $u_0\in\fX^G(G/H)$, there exists a unique smooth solution $u:\mathbb{R}\to\fX^G(G/H)$ to \eqref{Eulerhom} such that $u(0,x)=u_0(x)$ for all $x\in G/H$. Moreover, $\|u(t,x)\|=\|u_0(x)\|$ for all $x\in G/H$ and $t\in\R$.
\end{prop}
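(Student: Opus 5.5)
The plan is to reduce \eqref{Eulerhom} to an ODE on the finite-dimensional space $\fX^G(G/H)$, and then use conservation of the pointwise norm to rule out blow-up. By Lemma \ref{hominvvec}, evaluation at $eH$ followed by $\psi^{-1}$ identifies $\fX^G(G/H)$ linearly with $\fM_0$. The map $u\mapsto\nabla_u u$ sends $\fX^G(G/H)$ into itself, because $G$ acts by isometries and the Levi-Civita connection commutes with isometries (as in Proposition \ref{Eulergeometric}). In this finite-dimensional space it is a quadratic polynomial map $B(u,u)$, where $B(X,Y)=\nabla_XY$ is bilinear on $\fX^G(G/H)$. So \eqref{Eulerhom} becomes the autonomous ODE $\dot u=-B(u,u)$ on $\fM_0\simeq\R^{n_0}$ with polynomial right-hand side. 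Picard--Lindel\"of then gives a unique local solution $u:(-T_-,T_+)\to\fX^G(G/H)$. Each $u(t)$ is a smooth vector field on $G/H$, and the solution depends smoothly on $(t,x)$ because $u(t,x)=\sum_i c_i(t)Y_i(x)$ with smooth coefficients $c_i$ and a fixed basis $\{Y_i\}$ of $\fX^G(G/H)$.

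Next I would show the pointwise norm is conserved. Since $u(t)$ is $G$-invariant and $G$ acts isometrically and transitively, $|u(t)|^2_{\sg}$ is a constant function on $G/H$ for each $t$. Hence its gradient vanishes:
\[
0=\tfrac12\nabla|u|^2 .
\]
Pairing with $u$ gives $0=\tfrac12 u(|u|^2)=\sg(\nabla_u u,u)$. Then
\[
\frac{d}{dt}|u(t,x)|^2=2\sg(\partial_t u,u)=-2\sg(\nabla_u u,u)=0,
\]
so $|u(t,x)|=|u_0(x)|$ for all $t$ in the interval of existence.

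Finally, I would use this to rule out finite-time blow-up. The quantity $\|u\|_*:=|u(eH)|_{\sg}$ is a norm on the finite-dimensional space $\fX^G(G/H)$, since an invariant field vanishing at $eH$ vanishes everywhere. The ODE solution therefore stays in a compact set, so it cannot escape in finite time, and $T_\pm=\infty$. Uniqueness among smooth $G$-invariant solutions follows from ODE uniqueness, or alternatively from Theorem \ref{Eulerwellposed} together with Theorem \ref{sympreserve}.

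The step I expect to need the most care is justifying that $\nabla_u u$ stays in $\fX^G(G/H)$ and depends polynomially (bilinearly) on the coefficients. I would handle it by expanding $u=\sum c_iY_i$, so that $\nabla_uu=\sum c_ic_j\nabla_{Y_i}Y_j$. Each $\nabla_{Y_i}Y_j$ is $G$-invariant by isometry invariance, hence equals $\sum_k a_{ij}^kY_k$ with constant structure coefficients $a_{ij}^k$.
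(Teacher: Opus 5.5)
Your proof is correct. Its core step matches the paper exactly: $|u(t)|^2_{\sg}$ is spatially constant by invariance and transitivity, so $\sg(\nabla_u u,u)=\tfrac12 u(|u|^2)=0$ and the pointwise norm is conserved.

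Where you differ is in how you get existence, uniqueness and continuation. The paper takes the local smooth solution from the PDE theory (Theorem \ref{Eulerwellposed}, implicitly together with Theorem \ref{sympreserve} to keep it invariant). It uses the finite-dimensionality of $\fX^G(G/H)$ only to upgrade the conserved $C^0$ bound to a $C^1$ bound, and then invokes the $C^1$ blow-up criterion. You instead use finite-dimensionality from the outset. With constant structure coefficients $a_{ij}^k$, \eqref{Eulerhom} becomes a quadratic autonomous ODE on $\fM_0$. Picard--Lindel\"of and the escape lemma then do all the work, because the conserved quantity $|u(eH)|_{\sg}$ is itself a norm on the phase space.

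Your route is self-contained and bypasses the Sobolev well-posedness theory and blow-up criterion entirely. The uniqueness it gives is exactly the uniqueness within $\fX^G(G/H)$ that the statement asks for. The paper's route gives uniqueness among all smooth Euler solutions directly from Theorem \ref{Eulerwellposed}. It also follows the same blow-up-criterion template the paper needs later in the cohomogeneity one case, where invariant fields have coefficients depending on $r$ and no finite-dimensional ODE reduction is available.
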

\begin{proof}
 If $u$ solves \eqref{Eulerhom}, then
    $$\frac{d}{dt}\sg(u,u)=2\sg\left(\frac{du}{dt},u\right)=-2\sg(\nabla_u u,u)=-u(\sg(u,u)).$$
    Since $\sg$ and $u$ are $G$-invariant, $\sg(u,u)$ is constant in space. It follows that
    $$\frac{d}{dt}\sg(u,u)=0,$$
    and $\sg(u,u)$ remains constant in time. Therefore, $\|u(t)\|_{C^0}$ is uniformly bounded in time. Since $\fX^G(G/H)$ is finite-dimensional, it follows that the $C^1$ norm of $u$ is also uniformly bounded, so the blow-up criterion of Theorem \ref{Eulerwellposed} implies that the solution exists for all $t\in \mathbb{R}$. 
\end{proof}

\subsection{Cohomogeneity one manifolds}
Now, let $\sg$ be a $G$-invariant metric on the cohomogeneity one manifold $M$. By Lemma \ref{LCu}, the incompressible Euler fluid equations for a vector field $u$ of the form \eqref{vectorform} and a scalar pressure field $p$ become
\begin{align}\label{EulerCH1}
\begin{split}
    \frac{dh}{dt}&=-hh'-\sg_r(S_rv,v)-p',\\
    \frac{dv}{dt}&=h S_rv-h\nabla_{\partial_r}v-\nabla^r_v v,
    \end{split}
\end{align}
with divergence-free condition 
\begin{align}\label{Div0}
0&=h'-\cH h.
\end{align}

Because the principal orbits have codimension one and the $G$-invariant vector fields in the vertical direction are automatically divergence-free, the divergence-free condition \eqref{Div0} becomes an ODE in $r$ for the horizontal component. As a result, the structure of the space of horizontal divergence-free $G$-invariant vector fields is simple to describe.

\begin{lem}\label{BCs}
    If $M/G=[0,1]$, then the only horizontal divergence-free $G$-invariant vector field is the zero vector field. If $M/G=S^1$, then the space of horizontal divergence-free $G$-invariant vector fields is one-dimensional.
\end{lem}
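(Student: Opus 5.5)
By Lemma \ref{vectorlem}, a horizontal $G$-invariant vector field on $M_p$ has the form $u=h(r)\partial_r$ with $h$ smooth. By \eqref{divCH1} it is divergence-free exactly when $h'=\cH h$ on the relevant interval. Lemma \ref{MCV} gives $\cH=-\tfrac12(\ln V)'$, where $V(r)=\vol_{\sg_r}(G/H)>0$. The ODE therefore integrates explicitly: $(hV^{1/2})'=0$, so
\begin{equation*}
h(r)=c\,V(r)^{-1/2}
\end{equation*}
for some constant $c$. The plan is to analyse which values of $c$ are compatible with smoothness and well-definedness of $u$ on all of $M$, treating the two orbit-space cases separately.

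For $M/G=[0,1]$, I would show that $c=0$ is forced by looking at the endpoint $r=0$, where the orbit $G\gamma(0)$ is nonprincipal. I would split into two cases according to the type of this orbit.

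\emph{Singular orbit.} Lemma \ref{horsmoothness} gives $V(r)\to0$ as $r\to0$. Hence $|u|=|h|=|c|V^{-1/2}\to\infty$ unless $c=0$. This contradicts continuity (boundedness) of $u$ on the compact manifold $M$.

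\emph{Exceptional orbit.} Here $\dim V=1$ and $K$ acts transitively on the unit sphere $\{\pm1\}$ of $V$, so some $k\in K$ acts by $-1$ on $V$. A $G$-invariant vector field evaluated at $\gamma(0)=[(e,0)]$ must be fixed by $K$. Its horizontal component lies in $V$ and is therefore fixed by $-1$, so it vanishes. By continuity, $h(r)\to0$ as $r\to0$. On the other hand $V(r)\to V(0)>0$, a finite positive limit, because the exceptional orbit has the same dimension as the principal ones and the volume of $G/H$ tends to a finite multiple of the volume of $G\gamma(0)$. So $|c|V(0)^{-1/2}=0$, which gives $c=0$.

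Alternatively, both cases can be handled uniformly with Lemma \ref{EWSmoothness}(ii)–(iii). The horizontal part near the nonprincipal orbit is $u_0(r^2)\,r\partial_r$, so $h(r)=O(r)\to0$. In the singular case $V$ is bounded as $r\to0$, so again $c=0$.

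A cleaner option is the divergence theorem. A horizontal $u$ has $\int_M\nabla\cdot u=0$ automatically; the useful quantity is the flux through the orbit $G\gamma(r)$, which equals $h(r)V(r)$ up to a constant. Integrating $\nabla\cdot u$ over the region between the nonprincipal orbit and $G\gamma(r)$ shows this flux is zero for every $r$, so $h\equiv0$. I would mention this as a sanity check but rely on the explicit formula.

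For $M/G=S^1$, every $h=cV^{-1/2}$ with $V$ smooth and positive on $[0,1]$ is a candidate. The task is to check that $u=cV^{-1/2}\partial_r$ is globally well-defined and smooth. Because $\sg_r=\sg_{1+r}$ for $r$ near $0$ together with all derivatives, $V$ and all its derivatives match at $0$ and $1$. Also, $\partial_r$ is a globally defined smooth unit field, as established earlier. Hence $u$ is a smooth, $G$-invariant, horizontal, divergence-free field on $M$, and the space of such fields is exactly $\{cV^{-1/2}\partial_r : c\in\R\}$, which is one-dimensional.

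The step I expect to be most delicate is the exceptional-orbit endpoint in the interval case. There I must justify both that $h\to0$ (via $K$-invariance of the horizontal component, i.e.\ Lemma \ref{EWSmoothness}(iii)) and that $V$ stays bounded away from zero. I would argue the latter from the fact that the principal orbits near an exceptional orbit finitely cover it.
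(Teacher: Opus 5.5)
Your proposal is correct and follows the paper's proof essentially step for step. You integrate $h'=\cH h$ explicitly via Lemma~\ref{MCV}, force $c=0$ at a singular endpoint because the orbit volume $V(r)=\vol_{\sg_r}(G/H)$ tends to $0$, force $c=0$ at an exceptional endpoint because $K$-invariance gives $h(0)=0$, and use periodicity of $\sg_r$ when $M/G=S^1$; in the exceptional case you also usefully make explicit that $V(r)$ stays bounded as $r\to 0$, which the paper's ``since $h_0\neq 0$'' leaves implicit.

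As a side remark, your flux observation that $hV$ is constant is the accurate formula. The first variation formula gives $\cH=-(\ln V)'$ (round spheres in $\R^n$ have $\cH=-(n-1)/r$), so the solution is really $h=c/V$ rather than the $cV^{-1/2}$ that Lemma~\ref{MCV} suggests. This factor changes neither conclusion.
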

\begin{proof}
    Consider the case $M/G=[0,1]$. The expression for the mean curvature in Lemma \ref{MCV} reveals that the ODE \eqref{Div0} is solved by
    \begin{equation}\label{hsolution}
        h(r)=ch_0(r),\quad h_0(r)=\sqrt{\frac{\vol_{\sg_{1/2}}(G/H)}{\vol_{\sg_{r}}(G/H)}}.
    \end{equation}
    Suppose $G\gamma(0)$ is a singular orbit. By Lemma \ref{horsmoothness}, $\vol_{\sg_{r}}(G/H)\to0$ as $r\to0$. It follows that $h_0$ blows up as $r\to 0$, so in order for $h\partial_r$ to be a smooth vector field, we must have $c=0$. Thus, $h=0$.
    
    Next, suppose $G\gamma(0)$ is an exceptional orbit. By a similar argument to the proof of part (iii) of Lemma \ref{EWSmoothness}, we must have $h(0)=0$. Since $h_0\neq0$, it follows that $c=0$, so that $h=0$.

    Finally, if $M/G=S^1$, then all orbits are principal. The solution $h$ must satisfy a periodic boundary conditions at $r=0$ and $r=1$. Since the metric $\mathsf{g}_r$ is also periodic on $[0,1]$, every vector field of the form \eqref{hsolution} satisfies the periodic boundary condition.
\end{proof}

Lemma \ref{BCs} implies that in the case $M/G=[0,1]$, any divergence-free $G$-invariant vector field points solely in the vertical direction (i.e., tangent to the orbits with no ``mixing'' between orbits). We can then consider the problem at each orbit $G\gamma(r)$ separately as a pointwise homogeneous problem for each $r$. We thus get $C^0$ bounds immediately using a similar argument to Proposition \ref{noblow-upHom}. 
\begin{lem}\label{intervalC0}
   Suppose $M/G=[0,1]$. If $u:[0,T)\to \fX^G(M)$ is a smooth solution to \eqref{EulerCH1}--\eqref{Div0}, then for each $x\in M$ and $t\in [0,T)$, we have $\|u(t,x)\|=\|u(0,x)\|$. 
\end{lem}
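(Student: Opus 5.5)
By Theorem \ref{sympreserve}, $u(t)$ and $p(t)$ remain $G$-invariant for every $t\in[0,T)$. Each $u(t)$ is also divergence-free, so Lemma \ref{BCs} applies to its horizontal part. Write $u(t)=h(t)\partial_r+v(t)$ on $M_p$ as in \eqref{vectorform}. The horizontal field $h(t)\partial_r$ is itself divergence-free, because by \eqref{divCH1} the divergence of $u$ depends only on $h$, the vertical invariant part contributing nothing. Lemma \ref{BCs} with $M/G=[0,1]$ then forces $h(t)\equiv 0$ for all $t$. Consequently $u(t)=v(t)$ is vertical on $M_p$, and \eqref{EulerCH1} reduces to
\begin{align*}
\frac{dv}{dt}=-\nabla^r_v v,\qquad 0=\sg_r(S_rv,v)+p'.
\end{align*}

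Next, fix $r\in(0,1)$ and work pointwise on the orbit $G\gamma(r)$, following the argument of Proposition \ref{noblow-upHom}. Since $\partial_t$ commutes with evaluation at a fixed point and the metric is time independent, we have
\begin{align*}
\frac{d}{dt}\sg(v,v)=2\sg\Big(\frac{dv}{dt},v\Big)=-2\sg_r(\nabla^r_vv,v)=-v\big(\sg_r(v,v)\big).
\end{align*}
The last equality uses metric compatibility of $\nabla^r$ on the orbit. The function $\sg(v,v)$ is $G$-invariant, hence constant on each orbit, and $v$ is tangent to the orbit, so the right-hand side vanishes. An alternative is to take the inner product of the full equation $\frac{du}{dt}=-\nabla_uu-\nabla p$ with $u$: we get $\frac{d}{dt}|u|^2=-u(|u|^2)-2u(p)$, and both terms vanish because $u$ is vertical while $|u|^2$ and $p$ are invariant. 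This second version avoids the decomposition altogether and holds at every point of $M$ where $u$ is tangent to the orbit.

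Hence $\|u(t,x)\|=\|u(0,x)\|$ for all $x\in M_p$. The remaining step is to extend this to the nonprincipal orbits. Since $M_p$ is dense in $M$ (its complement consists of the two orbits over the endpoints of $[0,1]$) and $u(t)$ and $u(0)$ are continuous, the identity extends to all of $M$ by continuity. The main care point is justifying $h\equiv0$ at each time, which needs divergence-freeness together with smoothness of $u(t)$ across the nonprincipal orbits. Both are supplied by Theorem \ref{Eulerwellposed}, so Lemma \ref{BCs} applies at each fixed $t$.
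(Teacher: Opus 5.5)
Your proposal is correct and follows the paper's proof essentially step for step. Lemma \ref{BCs} forces $h\equiv 0$, the identity $\frac{d}{dt}\sg_r(v,v)=-v(\sg_r(v,v))=0$ gives pointwise conservation on each principal orbit, and continuity (the paper's limit $r\to 0$ or $r\to 1$) handles the nonprincipal orbits; your alternative of pairing the full equation with $u$ and using $G$-invariance of $|u|^2$ and $p$ is just a harmless variant of the same computation.
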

\begin{proof}
    By Lemma \ref{BCs}, any $u\in\fX^G(M)$ points solely in the vertical direction, i.e., $h=0$. 
    The second equation of \eqref{EulerCH1} reveals that $v$ must satisfy $\frac{dv}{dt}=-\nabla_v^r v$, so
    $$\frac{d}{dt}\mathsf{g}_r(v(r),v(r))=- v(r)\mathsf{g}_r(v(r),v(r))=0$$
    for each $r\in (0,1)$. This implies that for each $r\in (0,1)$ and $t\in [0,T)$, we have $\|v(t,r)\|=\|v(0,r)\|$. The result thus follows unless $x$ is in a nonprincipal orbit, in which case using the fact that $v$ is smooth and taking the limit of $\|v(t,r)\|=\|v(0,r)\|$ as $r\to 0$ (or $r\to 1$) gives the result.
\end{proof}

Let us now turn to $C^1$ bounds in the case $M/G=[0,1]$. To do this, we will use the framework of Section \ref{singPrelims}. 
Consider a smooth time-varying $G$-invariant vertical vector field $u(t)$ restricted to an invariant neighbourhood of a nonprincipal orbit that is equivariantly diffeomorphic to $G\times_K V$. There is a corresponding time-varying $K$-equivariant map $W(t):V\to V\oplus\mathfrak{n}$ found by restricting $u$ to slice points $[(e,v)]\in G\times_K V$. We then decompose $W(t):V\to V\oplus\mathfrak{n}$ into a finite sum
$$W(t,v)=\sum_{i=1}^N u_i(t,|v|^2)W_i(v),$$
where all of the $W_i$ are vertical, by parts (i) and (ii) of Lemma \ref{EWSmoothness}. Taylor expanding the coefficients $u_i$ at $v=0$ to $1^\text{st}$ order in $|v|^2$ and splitting the sum into terms of the same degree gives
$$W(t,v)=\sum_{\deg(W_i)=0} u_i(t,0)W_i(v)+\sum_{\deg(W_i)=1} u_i(t,0)W_i(v)+R(t,v).$$
The first term is a time-varying function that is constant in $v$, which we will denote $W^{(0)}(t)\in V\oplus\mathfrak{n}$. The second term is a time-varying function that is linear in $v$, which we will denote $W^{(1)}(t)\in\mathcal{L}(V;V\oplus\mathfrak{n})^K$. The term $R(t,v)$ includes all the generators $W_i$ with degree at least $2$, as well as the remainder terms from the Taylor expansion of the coefficients $u_i$ (which have degree at least $2$ in $v$ as $u_i$ is a function of $|v|^2$). Finally, extending $W^{(0)}$, $W^{(1)}$ and $R$ back to invariant vector fields in slice coordinates $(v,x)\in V\oplus\mathfrak{n}$ (as in Lemma \ref{vecfielddegree}) yields
$$u(t,v,x)=u^{(0)}(t,v,x)+u^{(1)}(t,v,x)+u^{(2+)}(t,v,x),$$
with $u^{(0)}=\widehat{W^{(0)}}$, $u^{(1)}=\widehat{W^{(1)}}$, and $u^{(2+)}=\hat{R}$.

The following lemma gives a uniform bound for the first two terms in the Taylor expansion. To do this, we write an evolution equation for $u^{(1)}$ in terms of $u^{(0)}$ and $u^{(1)}$. The only terms that are nonlinear in $u^{(1)}$ are rescaled in a neighbourhood of the nonprincipal orbit and shown to satisfy the same energy cancellation law as in Proposition \ref{noblow-upHom}.

\begin{lem}\label{u1bounds}
 Suppose $M/G=[0,1]$. If $u:[0,T)\to \fX^G(M)$ is a smooth solution to \eqref{EulerCH1}--\eqref{Div0}, then $u^{(0)}$ and $u^{(1)}$ are both uniformly bounded on $[0,T)$ in a neighbourhood of a nonprincipal orbit.  
\end{lem}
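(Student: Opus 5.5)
Since $h=0$ by Lemma \ref{BCs}, $u$ is vertical and satisfies $\frac{du}{dt}=-\nabla_u u$ wherever it is defined; by continuity this holds on the whole neighbourhood $G\times_K V$ of the nonprincipal orbit. The plan is to take the degree $0$ and degree $1$ parts of this equation in slice coordinates on the slice $x=0$, and to obtain a closed finite-dimensional ODE system for $(W^{(0)}(t),W^{(1)}(t))$.

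\textbf{Step 1: bound $u^{(0)}$.} The degree $0$ part of the equation evaluated at $v=0$ is just the equation on the nonprincipal orbit $G\gamma(0)\simeq G/K$. By Lemma \ref{intervalC0}, $\|u(t,x)\|=\|u(0,x)\|$, and $u^{(0)}(t)$ restricted to the orbit equals $u(t)$ there. Hence $|W^{(0)}(t)|$ is constant, so $u^{(0)}$ is uniformly bounded.

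\textbf{Step 2: evolution of $W^{(1)}$.} Write $u=u^{(0)}+u^{(1)}+u^{(2+)}$ and expand $\nabla_u u$. By the Remark after Lemma \ref{ivg}, $\nabla_{u^{(2+)}}(\cdot)$ and $\nabla_{(\cdot)}u^{(2+)}$ have no degree $\le 1$ part when paired with terms of degree $\ge 1$. When paired with $u^{(0)}$, Lemma \ref{ivg} with $k\ge2$ shows that these terms also vanish in degrees $0$ and $1$. Therefore
\[
\frac{dW^{(1)}}{dt}=-\Big[\nabla_{u^{(0)}}u^{(1)}+\nabla_{u^{(1)}}u^{(0)}+\nabla_{u^{(0)}}u^{(0)}\Big]^{(1)}-\big[\nabla_{u^{(1)}}u^{(1)}\big]^{(1)}.
\]
The first bracket is linear in $W^{(1)}$, with coefficients depending on $W^{(0)}(t)$ (already bounded) and on the metric at the orbit. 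The term $\nabla_{u^{(0)}}u^{(0)}$ contributes a forcing term that is bounded by Step 1. The remaining term $[\nabla_{u^{(1)}}u^{(1)}]^{(1)}$ is quadratic in $W^{(1)}$. A Grönwall argument handles the linear and forcing parts, provided the quadratic part does not increase a suitable norm.

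\textbf{Step 3 (main obstacle): energy cancellation for the quadratic term.} To control $[\nabla_{u^{(1)}}u^{(1)}]^{(1)}$, I would rescale near the orbit. Set $\delta_\lambda(v,x)=(\lambda v,x)$ and consider $\lambda^{-1}\delta_\lambda^*u^{(1)}$ together with the rescaled metric $\lambda^{-2}\delta_\lambda^*\sg$. As $\lambda\to0$, this metric converges to the flat-in-$V$ model metric on the normal bundle. The degree $1$ part of $\nabla_{u^{(1)}}u^{(1)}$ should then coincide with the covariant derivative of the limit field in this model metric. In the model, $u^{(1)}$ is a linear field of the form $v\mapsto Av$ plus a vertical part, and it is tangent to the spheres $|v|=\mathrm{const}$ by Lemma \ref{EWSmoothness}(ii). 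The quadratic term therefore satisfies
\[
\ip{\nabla_{u^{(1)}}u^{(1)}}{u^{(1)}}=\tfrac12\,u^{(1)}\big(|u^{(1)}|^2\big)=0,
\]
because $|u^{(1)}|^2$ is $K$-invariant, hence constant on spheres, while $u^{(1)}$ is tangent to those spheres; this is the same mechanism as in Proposition \ref{noblow-upHom}. Taking $\|W^{(1)}\|^2$ to be the integral of $|u^{(1)}|^2$ over the unit sphere in the model metric, we get
\[
\frac{d}{dt}\|W^{(1)}\|^2\le C\big(1+\|W^{(1)}\|^2\big).
\]
This yields a bound for $W^{(1)}$ on the finite interval $[0,T)$, and hence a bound for $u^{(1)}$ in a neighbourhood of the orbit via Lemma \ref{vecfielddegree}.

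The delicate point is justifying that the degree $1$ part of the curved connection agrees with the model connection. A cruder alternative, if that identification fails, is to bound $W^{(1)}$ directly. By Lemma \ref{intervalC0}, $\|u(t)\|_{C^0}$ is conserved, and $W^{(1)}$ is a derivative at the orbit. This gives only a bound on $W^{(1)}$ relative to the higher-order terms, which is why the rescaling argument is needed.
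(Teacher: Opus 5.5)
Your Steps 1 and 2 and the concluding Gr\"onwall argument match the paper. Step 3, however, is the only nontrivial point of the lemma, and it is not established; the mechanism you propose for it does not work as stated. For $(a,b)\in V\oplus\mathfrak{n}$ one has $\lambda^{-2}(\delta_\lambda^*\sg)_{(v,x)}\big((a,b),(a,b)\big)=\sg_{(\lambda v,x)}\big((a,\lambda^{-1}b),(a,\lambda^{-1}b)\big)$. This diverges as $\lambda\to0$ whenever $b\neq0$, because $\sg$ is nondegenerate on the orbit directions $\mathfrak{n}$. Similarly, the $V$-component of $\lambda^{-1}\delta_\lambda^*u^{(1)}$ on the slice is $\lambda^{-1}W^{(1)}_V(v)$, which blows up unless $W^{(1)}_V=0$. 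So no limiting ``model metric'' of this kind exists. The identification of $(\nabla_{u^{(1)}}u^{(1)})^{(1)}$ with a flat covariant derivative, which you yourself flag as the delicate point, is therefore left unproved. Without it, the cancellation of the quadratic term is only a heuristic, and the whole estimate rests on that cancellation (as you note, $C^0$ conservation alone cannot control $W^{(1)}$).

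The paper needs no model comparison. The field $u^{(1)}=\widehat{W^{(1)}}$ is itself a smooth $G$-invariant \emph{vertical} field on the actual manifold, so $\sg(\nabla_{u^{(1)}}u^{(1)},u^{(1)})=\tfrac12u^{(1)}\big(\sg(u^{(1)},u^{(1)})\big)=0$ holds exactly for the curved metric. On the slice, the degree-$0$ part of $\nabla_{u^{(1)}}u^{(1)}$ vanishes by the Remark after Lemma \ref{ivg}. Its $(2+)$ part pairs with $u^{(1)}=O(|v|)$ to give $O(|v|^3)$. Dividing by $|v|^2$ and letting $v\to0$ yields $\tilde{\sg}\big((\nabla_{u^{(1)}}u^{(1)})^{(1)},u^{(1)}\big)=0$, where $\tilde{\sg}(a,b)=\lim_{|v|\to0}|v|^{-2}\sg_{[(e,v)]}(a(v),b(v))=\sg_{[(e,0)]}(a(v_0),b(v_0))$ with $|v_0|=1$. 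Up to a constant, this is the norm you wanted.

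Your own route can also be repaired directly, in the style of the proof of Lemma \ref{ivg}. On $x=0$, the Christoffel term of $\nabla_{u^{(1)}}u^{(1)}$ is $O(|v|^2)$, and the $\mathfrak{n}$-derivative terms are $O(|v|)\cdot O(|v|)$ by Lemma \ref{vecfielddegree}. Hence $(\nabla_{u^{(1)}}u^{(1)})^{(1)}(v)=W^{(1)}\big(W^{(1)}_V(v)\big)$, and your $K$-invariance and tangency argument with the constant metric $\sg_{[(e,0)]}$ then goes through.

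Finally, $\frac{du}{dt}=-\nabla_uu$ is not correct as written. The horizontal part $\sg_r(S_rv,v)\partial_r$ of $\nabla_uu$ is balanced by $\nabla p$, so the equation is $\frac{du}{dt}=-\nabla^r_uu$. This does not affect the estimate, since horizontal fields are $\tilde{\sg}$-orthogonal to the vertical field $u^{(1)}$, but it should be stated.
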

\begin{proof}
    The boundedness of $u^{(0)}$ follows from Lemma \ref{intervalC0}. For $u^{(1)}$, observe that since taking the Taylor expansion of $u$ in the $V$ direction is independent of time, we have $(\frac{du}{dt})^{(1)}=\frac{du^{(1)}}{dt}$. Also, the map $u\mapsto u^{(1)}$ is linear because in the decomposition given in Lemma \ref{EWSmoothness}, the generators $W_i$ are fixed and the functions $u_i$ are coefficients with respect to the basis $\{W_i\}$ (so change linearly in $u$). By Lemma \ref{ivg} (and the subsequent remark), we have
    $$(\nabla^r_{u^{(0)}} u^{(2+)})^{(1)}=(\nabla^r_{u^{(2+)}} u^{(0)})^{(1)}=(\nabla^r_{u^{(1)}} u^{(2+)})^{(1)}=(\nabla^r_{u^{(2+)}} u^{(1)})^{(1)}=(\nabla^r_{u^{(2+)}} u^{(2+)})^{(1)}=0.$$
    It follows that
    $$\frac{d u^{(1)}}{dt}=(-\nabla^r_{u}u)^{(1)}=-(\nabla^r_{u^{(0)}} u^{(0)})^{(1)}-(\nabla^r_{u^{(0)}} u^{(1)})^{(1)}-(\nabla^r_{u^{(1)}} u^{(0)})^{(1)}-(\nabla^r_{u^{(1)}} u^{(1)})^{(1)}.$$
    We define a bilinear form $\tilde{\sg}$ on $\mathcal{L}(V,V\oplus \mathfrak{n})^{K}$ by
    $$\tilde{\mathsf{g}}(a,b)=\lim_{|v|\to 0}\frac{1}{|v|^2} \mathsf{g}_{[(e,v)]}(a(v),b(v)),$$
    where $\sg$ is the $G$-invariant metric on $G\times_K V$, and $a(v),b(v)$ are tangent vectors at $[(e,v)]$ via the identification $T_{[(e,v)]}(G\times_K V)\simeq V\oplus\mathfrak{n}$. Since $\sg$ is $G$-invariant, $\sg_{[(e,v)]}(a(v),b(v))$ is a $K$-invariant scalar function, thus depends only and smoothly on $|v|^2$ and is zero at $v=0$. Thus, the limit that defines $\tilde{\sg}$ indeed exists. We claim that $\tilde{\sg}$ is an inner product on $\mathcal{L}(V,V\oplus\mathfrak{n})^K$. It is clear that $\tilde{\sg}$ is symmetric and positive semidefinite. It is in fact positive definite: if $a\in\mathcal{L}(V,V\oplus\mathfrak{n})^K\setminus\{0\}$, then there is a unit vector $v_0\in V$ with $a(v_0)\neq0$. Taking $v=\lambda v_0$, we get
    $$\frac{1}{|v|^2}\sg_{[(e,v)]}(a(v),a(v))=\sg_{([e,\lambda v_0])}(a(v_0),a(v_0)).$$
    Taking the limit as $\lambda\to 0$ gives $\tilde{\sg}(a,a)=\sg_{[(e,0)]}(a(v_0),a(v_0))\neq0$, since $\sg_{[(e,0)]}$ is the metric at the nonprincipal orbit and is nondegenerate. Therefore, $\tilde{\sg}$ is an inner product.
    
    We claim that $(\nabla^r_{u^{(1)}} u^{(1)})^{(1)}$ satisfies the following with respect to $\tilde{\mathsf{g}}$:
    $$\tilde{\sg}\left((\nabla^r_{u^{(1)}} u^{(1)})^{(1)},u^{(1)}\right)=0.$$
    Indeed, since $u^{(1)}$ is $G$-invariant, we have 
    $$0=\mathsf{g}(\nabla_{u^{(1)}}u^{(1)},u^{(1)}).$$
    As we did with $u$, we now Taylor expand $\nabla_{u^{(1)}}u^{(1)}$ as $\nabla_{u^{(1)}}u^{(1)}=(\nabla_{u^{(1)}}u^{(1)})^{(1)}+(\nabla_{u^{(1)}}u^{(1)})^{(2+)}$. Observe that $(\nabla_{u^{(1)}}u^{(1)})^{(0)}=0$ by the remark after Lemma \ref{ivg}. Also, $(\nabla_{u^{(1)}}u^{(1)})^{(2+)}=O(|v|^2)$ and $u^{(1)}=O(|v|)$, so $\sg((\nabla_{u^{(1)}}u^{(1)})^{(2+)},u^{(1)})=O(|v|^3)$. Therefore, dividing by $|v|^2$ and taking the limit as $|v|\to 0$ causes this term to vanish. It follows that
    $$0=\lim_{|v|\to0}\frac{1}{|v|^2}\sum_{i=1}^m\mathsf{g}_{[(e,v)]}((\nabla_{u^{(1)}}u^{(1)})^{(i)},u^{(1)})=\tilde{\sg}((\nabla_{u^{(1)}}u^{(1)})^{(1)},u^{(1)}).$$
    We now write an evolution equation for $\tilde{\sg}(u^{(1)},u^{(1)})$. Observe that
    \begin{align*}
        \left|\frac{d}{dt}\tilde{\sg}(u^{(1)},u^{(1)})\right|&=2\left|\tilde{\sg}\left(\frac{du^{(1)}}{dt},u^{(1)}\right)\right|\\
        &\le2\left|\tilde{\sg}\left((\nabla^r_{u^{(0)}} u^{(0)})^{(1)},u^{(1)}\right)\right|+2\left|\tilde{\sg}\left((\nabla^r_{u^{(0)}} u^{(1)})^{(1)}+(\nabla^r_{u^{(1)}} u^{(0)})^{(1)},u^{(1)}\right)\right|.
    \end{align*}
    Because $u^{(0)}$ is bounded in $C^0$ and lies in a finite-dimensional space of smooth vector fields, we have that $u^{(0)}$ is bounded in $C^k$ for all $k\in\N$. Similarly,  $u^{(1)}$ also lies in a finite-dimensional space of smooth functions, so for any $k\in \mathbb{N}$, the $C^k$ norm of $u^{(1)}$ is controlled by the $C^0$ norm. Bounds on $\|u^{(1)}\|_{\tilde{\sg}}$ on $[0,T)$ then follow from the Cauchy-Schwarz and Gr\"onwall inequalities. 
\end{proof}

We now bound the remaining terms in the Taylor expansion of $u$ and subsequently use this to obtain uniform $C^1$ bounds of $u$ in a neighbourhood of the nonprincipal orbits. To do this, we write an evolution equation for $u^{(k)}$, $k\geq 2$ and observe that there are no terms that are nonlinear in $u^{(k)}$ (or contain $u^{(\ell)}$ for $\ell>k$). The bounds on $u^{(k)}$ imply that $u$ is bounded in $C^1$ in a possibly smaller neighbourhood of the nonprincipal orbit. We also prove $C^1$ bounds in a neighbourhood of each principal orbit by choosing a local frame of invariant vector fields and using a similar but simpler argument to the nonprincipal orbit case. Combined with the compactness of $M$ and the blow-up criterion of Theorem \ref{Eulerwellposed}, this proves Theorem \ref{maintheorem} in the case $M/G=[0,1]$.
\begin{prop}\label{noblow-upInterval}
    Suppose $M/G=[0,1]$. For any smooth divergence-free $G$-invariant vector field $u_0\in\fX^G(M)$, there exists a unique smooth solution $u:\mathbb{R}\to\fX^G(M)$ to \eqref{EulerCH1}--\eqref{Div0} such that $u(0,x)=u_0(x)$ for all $x\in M$.
\end{prop}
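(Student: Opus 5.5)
By Theorem \ref{Eulerwellposed} and Theorem \ref{sympreserve}, there is a unique smooth $G$-invariant solution on a maximal interval $[0,T^*)$. By time reversibility it suffices to treat forward time. We argue by contradiction: assume $T^*<\infty$ and show that $\sup_{t\in[0,T^*)}\|u(t)\|_{C^1(M,\sg)}<\infty$. By compactness of $M/G=[0,1]$, it is enough to obtain uniform $C^1$ bounds on finitely many invariant neighbourhoods. We take two slice neighbourhoods $G\times_K V$ of the nonprincipal orbits (over $[0,\delta)$ and $(1-\delta,1]$) and finitely many invariant tubes $G\times_H(a,b)\subseteq M_p$ covering $[\delta/2,1-\delta/2]$. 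Throughout, Lemma \ref{BCs} gives $h\equiv 0$, so $u=v$ is vertical and satisfies the decoupled equation $\partial_t v=-\nabla^r_v v$ orbitwise. Lemma \ref{intervalC0} then gives $\|u(t,x)\|=\|u_0(x)\|$ pointwise.

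\textbf{Principal part.} On $G\times_H(a,b)$, write $v=\sum_i v_i(t,r)Y_i$ using Lemma \ref{vectorlem}. The $C^0$ bound controls all $v_i$ uniformly, and vertical derivatives of $u$ are controlled by the $v_i$ because the $Y_i$ are fixed smooth fields on a compact closure. So only the $r$-derivatives $w_i=\partial_r v_i$ remain. Writing $\nabla^r_{Y_i}Y_j=\sum_k \Gamma_{ij}^k(r)Y_k$ with $\Gamma_{ij}^k$ smooth on $[a,b]$, the equation becomes the ODE system $\partial_t v_k=-\Gamma_{ij}^k(r)v_iv_j$ for each $r$. Differentiating in $r$ gives $\partial_t w_k=-(\Gamma_{ij}^k)'v_iv_j-\Gamma_{ij}^k(w_iv_j+v_iw_j)$. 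This is linear in $w$ with coefficients bounded uniformly in $t$, so Gr\"onwall yields $|w|\le Ce^{Ct}$, which is bounded on $[0,T^*)$.

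\textbf{Near a nonprincipal orbit.} Lemma \ref{u1bounds} bounds $u^{(0)}$ and $u^{(1)}$, hence all their derivatives, since they lie in finite-dimensional spaces. For $k\ge2$, take the degree-$k$ Taylor part of $\partial_t u=-\nabla^r_u u$. By Lemma \ref{ivg} and the following remark, $(\nabla_{u^{(p)}}u^{(q)})^{(k)}$ vanishes unless $k\ge p+q-1$ (with $p=0$ giving $k\ge q$). Hence the right side of $\partial_t u^{(k)}$ involves only $u^{(\ell)}$ with $\ell\le k$, and is linear in $u^{(k)}$ with coefficients depending on $u^{(0)},u^{(1)}$. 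By induction and Gr\"onwall, every $u^{(k)}$ is uniformly bounded on $[0,T^*)$.

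\textbf{Main obstacle.} The hard step is upgrading these Taylor bounds to a genuine uniform $C^1$ bound on a fixed neighbourhood, since finitely many Taylor coefficients do not control the remainder. We would instead work with the coefficient functions $u_i(t,s)$, $s=|v|^2$, from \eqref{polyform}. Corollary \ref{Christoffel} turns the equation into the ODE system $\partial_t u_k=-c_{ij}^k(s)u_iu_j$, parametrised by $s$, with $c_{ij}^k$ smooth up to $s=0$. This is the same structure as the principal-part argument, but now $s$ ranges over a compact interval $[0,\delta^2]$ including the singular orbit. A $C^0$ bound on the $u_i$ follows from Lemma \ref{intervalC0} together with the bound on $u^{(0)},u^{(1)}$: the $C^0$ norm controls the coefficients except in the low-degree directions, and those are handled by Lemma \ref{u1bounds}. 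Differentiating in $s$ then gives a linear system for $\partial_s u_i$, and Gr\"onwall bounds it. Finally, $C^1$ bounds for $u$ follow from the representation \eqref{polyform} and the chain rule through $s=|v|^2$.

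Combining the neighbourhoods contradicts the blow-up criterion in Theorem \ref{Eulerwellposed}, so $T^*=\infty$, and the same holds backward in time.
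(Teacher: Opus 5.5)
You have a genuine gap at exactly the step you call the main obstacle: the uniform $C^0$ bound on the coefficients $u_i(t,s)$ for $s\in[0,\delta^2]$. Your principal-part argument and the Taylor induction for $u^{(k)}$ are fine, but this step fails, and your justification runs the wrong way. For $v=\lambda v_0$ with $|v_0|=1$, one has $W_i(v)=\lambda^{d_i}W_i(v_0)$, where $d_i=\deg W_i$. So the conserved norm $|u(t,[(e,v)])|$ is comparable to $\bigl(\sum_i\lambda^{2d_i}u_i(t,\lambda^2)^2\bigr)^{1/2}$. Lemma~\ref{intervalC0} therefore gives only $|u_i(t,s)|\le Cs^{-d_i/2}$. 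This controls the degree-zero coefficients and degenerates as $s\to0$ for every generator of positive degree.

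Lemma~\ref{u1bounds} and your induction control only Taylor data of $u_i$ \emph{at the single point} $s=0$. That information, together with $|s^{d_i/2}u_i|\le C$, says nothing about $\sup_{0<s\le\delta^2}|u_i(t,s)|$: a profile concentrating at $s\sim\varepsilon(t)\to0$ is compatible with both. Since $\partial_t u_k=-\sum c^k_{ij}(s)u_iu_j$ is quadratic, the linear system you write for $\partial_s u_i$ has the $u_i$ themselves as coefficients, so your Gr\"onwall step cannot start. The analogy with the principal part breaks precisely because the positive-degree $W_i$ vanish on the nonprincipal orbit, whereas the $Y_i$ are uniformly independent on $[a,b]$.

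The missing idea is the one the paper uses: write $u_k(t,s)=u_k(t,0)+s\,\tilde u_k(t,s)$. Then
\[
\partial_t\tilde u_k=-s\sum_{i,j}c^k_{ij}\tilde u_i\tilde u_j-\sum_{i,j}c^k_{ij}\bigl(\tilde u_i u_j(t,0)+u_i(t,0)\tilde u_j\bigr)-\sum_{i,j}\frac{c^k_{ij}(s)-c^k_{ij}(0)}{s}\,u_i(t,0)u_j(t,0).
\]
The coefficients here are bounded on $[0,T^*)$ by your Taylor bounds, and the only nonlinearity carries a factor $s$. Because $T^*<\infty$ is fixed, you can shrink $\delta$ (depending on $T^*$ and $u_0$). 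A bootstrap with Gr\"onwall then keeps $\tilde u$, and hence $u_i$, bounded on $[0,\delta^2]\times[0,T^*)$. Only after that does your differentiation in $s$ close and yield the $C^1$ bound. In particular, the slice neighbourhoods must be chosen after $T^*$ is known, not at the outset as in your setup.
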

\begin{proof}
    Fix an arbitrary $T>0$. It suffices to prove that $\|u(t)\|_{C^1(M,\mathsf{g})}<\infty$ for $t\in[0,T)$ by the blow-up criterion in Theorem \ref{Eulerwellposed}. In fact, by compactness of $M$, it suffices to show that each $q\in M$ is contained in an open neighbourhood $U\subseteq M$ on which $\sup_{t\in [0,T)}\|u(t)\|_{C^1(U,\mathsf{g})}<\infty$. 

  First, assume that $q$ lies in a nonprincipal orbit, and without loss of generality, that $q\in G \gamma(0)$. Set $U_0$ to be the neighbourhood on which Lemma \ref{EWSmoothness} applies. In Lemma \ref{u1bounds}, we showed that $u^{(0)}$ and $u^{(1)}$ are uniformly bounded on $[0,T)$; we now demonstrate that $u^{(k)}$ is also uniformly bounded for all $k\ge 2$. We have the evolution equation
    $$\frac{d u^{(k)}}{dt}=\sum_{i,j=0}^{\infty}(-\nabla_{u^{(i)}} u^{(j)})^{(k)}=\sum_{\substack{i+j\le k+1 \\ i>0, j>0}}(-\nabla_{u^{(i)}} u^{(j)})^{(k)}$$
    by Lemma \ref{ivg}. For $k\ge 2$, we therefore have 
    \begin{align*}
        \frac{d u^{(k)}}{dt}=\sum_{i,j=1}^{k-1}\left(-\nabla_{u^{(i)}} u^{(j)}\right)^{(k)}-\left(\nabla_{u^{(1)}}u^{(k)}\right)^{(k)}-\left(\nabla_{u^{(k)}}u^{(1)}\right)^{(k)}.
    \end{align*}
We proceed by induction to prove that $u^{(k)}$ is uniformly bounded. Indeed, the inductive hypothesis gives us uniform control of the first term, and since $u^{(k)}$ appears linearly on the other terms, we can control the growth on $[0,T)$ with Gr\"onwall's inequality. 
    
On $U_0$, we can write
    $$u(t,x)=\sum_{i=1}^n u_i(t,r^2) W_i(x)$$
    as in Lemma \ref{EWSmoothness}. The bounds on $u^{(k)}$ for all $k\ge 0$ imply that $u_i(t,0)$ is bounded on $t\in[0,T)$ for all $i=1,\dots,n$. Using Corollary \ref{Christoffel}, we have 
   \begin{align*}
       \frac{d u_k}{dt}=\sum_{i,j=1}^n c_{ij}^{k} u_i u_j. 
   \end{align*}
   Writing $u_k(r^2)=u_k(0)+r^2 \tilde{u}_k(r^2)$, we find that the $\tilde{u}_k$ functions satisfy the equation
   \begin{align}\label{usingpet}
       \frac{d \tilde{u}_k}{dt}= r^2  \sum_{i,j}c^{k}_{ij}\tilde{u}_i\tilde{u}_j+ \sum_{i,j}c^{k}_{ij}\tilde{u}_iu_j(0)+ \sum_{i,j}c^{k}_{ij}u_i(0)\tilde{u}_j.
   \end{align}
   Since $T>0$ and the initial data is fixed, it is possible to restrict $U_0$ (thus restricting the size of $r^2$) to ensure boundedness of $\|\tilde{u}_i\|_{C^0(U_0)}$ all the way up until time $T$. Bounds on $\|\tilde{u}_i\|_{C^1(U_0)}$ then follow by differentiating \eqref{usingpet} in $r$, from which we obtain $C^1$ bounds on $u$ in $U_0$.  

  Let us now assume that $q$ is on the principal orbit $G \gamma(r_0)$. In this case, Lemma \ref{vectorlem} gives $u=\sum_{i=1}^{M} v_i(r) Y_i$ for some smooth and scalar functions $v_i$. Since $\nabla_{Y_i}Y_j$ is another smooth and $G$-invariant vector field on $M_p$, we can write 
  \begin{align*}
      -\nabla_{Y_i}Y_j=\sum_{k} d_{ij}^{k}(r) Y_k
  \end{align*}for some smooth and scalar functions $d_{ij}^{k}(r)$. We can therefore write the evolution equation for $u$ as 
       \begin{align*}
       \frac{d v_k}{dt}=\sum_{i,j} d_{ij}^{k} v_i v_j. 
   \end{align*}
   Writing $v_k(r)=v_k(r_0)+(r-r_0)\tilde{v}_k(r)$, we obtain the equations 
\begin{align*}
       \frac{d \tilde{v}_k}{dt}= (r-r_0)  \sum_{i,j}d^{k}_{ij}\tilde{v}_i\tilde{v}_j+ \sum_{i,j}d^{k}_{ij}\tilde{v}_iv_j(r_0)+ \sum_{i,j}d^{k}_{ij}v_i(r_0)\tilde{v}_j,
   \end{align*}
and since $v_i(r_0)$ is bounded, and we can argue similarly to the nonprincipal orbit case. 
\end{proof}

The case $M/G=S^1$ is more interesting dynamically, since the space of horizontal divergence-free $G$-invariant vector fields is nontrivial by Lemma \ref{BCs}. Because of this, the Euler equations do not decouple orbit-wise. Nevertheless, the fact that the space of horizontal divergence-free vector fields is finite-dimensional can be combined with conservation of energy to conclude global regularity for the horizontal component of the solution, from which a maximum principle argument can be used to conclude global regularity for the entire solution. Since we have already proven global regularity for the cases $M/G=[0,1]$, proving global regularity for $M/G=S^1$ yields Theorem \ref{maintheorem}.

\begin{proof}[Proof of Theorem \ref{maintheorem}]
    Due to Proposition \ref{noblow-upInterval}, the only case we need to consider is $M/G=S^1$. Once again, it suffices to fix an arbitrary $T>0$, and show that if $u\in\fX^G(M)$ satisfies the initial condition $u(0)=u_0$, and also solves \eqref{EulerCH1}--\eqref{Div0} on $[0,T)$, then $\sup_{t\in [0,T)}\|u(t)\|_{C^1}<\infty$. By Lemma \ref{BCs}, the horizontal component $h$ is given by
    $$h(t,r)=c(t)h_0(r),$$
    with $h_0$ being bounded and everywhere nonzero. We will show that $c$ is uniformly bounded using the standard conservation of $L^2$ energy. Observe that
    $$\sg(u,u)=h^2\sg(\partial_r,\partial_r)+\sg(v,v)=c^2h_0^2+\sg(v,v).$$
    The $L^2$ energy $E$ therefore satisfies
    $$2E=\int_M \sg(u,u)=c^2\underbrace{\int_M h_0^2}_{>0}+\underbrace{\int_M\sg(v,v)}_{\ge0}.$$
    Denoting the initial energy by $E_0$, we see that
    $$c^2\le\frac{2E_0}{\int_M h_0^2}.$$
    Thus, $c$ is uniformly bounded. It follows that $\sup_{t\in [0,T)} \|h(t)\|_{C^{k}}<\infty$ for any $k\in \mathbb{N}$. 

    To find a bound on $v$, we use the second equation of \eqref{EulerCH1}. Observe that
    \begin{align*}
        \frac{d}{dt} \mathsf{g}(v,v)= -h \partial_r \mathsf{g}(v,v)-2h \mathsf{g}_r (S_r v,v). 
    \end{align*}
  The first term is a transport term and the second term is controlled by a quantity which is linear in $\sg(v,v)$. The maximum principle combined with Gr\"onwall's inequality gives the $C^0$ estimate $\sup_{t\in [0,T)} \mathsf{g}(v,v)<\infty$.

  For the $C^1$ bounds, observe that since all of our orbits are principal, and the orbit space is $S^1$, Lemma \ref{vectorlem} gives us  $n_0=\dim(\mathfrak{m}_0)$ $G$-invariant vector fields $Y_1,\dots,Y_n$ that are defined globally and are everywhere linearly independent. These vector fields have the property that any smooth $G$-invariant vector field $X$ on $M$ can be written as $Z=\sum_{i=1}^{n} z_i(r) Y_i$, where $Z_i:S^1\to \mathbb{R}$ are smooth. As a consequence, we have $-\nabla_{Y_i}Y_j= \sum_{k=1}^{n}d_{ij}^{k} Y_k$ for some smooth functions $d_{ij}^{k}:S^1\to \mathbb{R}$. Writing now $v=\sum v_i Y_i$, with $v_i(t,r)$, $r\in S^1$, we have 
  \begin{align*}
      \frac{dv_k}{dt}=-h\partial_r (v_k)+\sum_{i=1}^{n}\ell_iv_i+\sum_{i,j=1}^{n}q_{ij}v_iv_j, 
  \end{align*}
  where $\ell_i,q_{ij}$ are smooth functions of $r$. 
  We thus have 
  \begin{align*}
      \frac{d}{dt}\sum_{k=1}^{n}(\partial_r v_k)^2=-h\partial_r \left(\sum_{k=1}^{n}(\partial_r v_k)^2\right)+F(r,v_1,\dots,v_n,\partial_r v_1,\dots,\partial_r v_n), 
  \end{align*}
  where $F$ is smooth and quadratic in the $\partial_r v_i$ entries. The scalar maximum principle again gives uniform $C^1$ bounds on the $v_i$ functions on $[0,T)$, from which follow the required $C^1$ bounds on $v$ and $u$. 
\end{proof}

   \section{Cohomogeneity one manifolds with boundary}
The global existence result of Theorem \ref{maintheorem} concerns cohomogeneity one manifolds \textit{without boundary}, but we can see that the same result holds if the cohomogeneity one Riemannian manifold $(M,\mathsf{g})$ includes boundary. In fact, if the boundary is nonempty, the boundary conditions force our vector field to be vertical.
 \begin{lem}
     If the compact, connected and smooth cohomogeneity one Riemannian manifold $(M,\mathsf{g})$ has nonempty boundary, then any smooth, divergence-free and $G$-invariant vector field $u$ which satisfies \eqref{BCs0} is vertical. 
 \end{lem}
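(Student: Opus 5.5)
The idea is to reduce to the horizontal ODE \eqref{Div0} and then pin down its free constant with a boundary point where the horizontal component must vanish.

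\emph{Orbit space.} Since $M$ is compact and connected with nonempty boundary and $G$ acts isometrically, it maps $\partial M$ to itself. So $\partial M$ is a union of orbits, and each boundary orbit has codimension one in $M$, i.e.\ it is a principal or exceptional orbit of full dimension lying in $\partial M$. I would argue, as in Theorem \ref{Ch1Structure}, that $M/G$ is a compact interval $[0,1]$ with at least one endpoint, say $r=0$, corresponding to a boundary orbit $G\gamma(0)\subseteq \partial M$. A circle is impossible here, because a boundary orbit cannot sit in the interior of a family of orbits on both sides. Interior points $r\in(0,1)$ correspond to principal orbits. At a boundary orbit the normal direction $\partial_r$ extends smoothly up to $r=0$, and the metric has the form \eqref{metricform} on $[0,1)$, with $\sg_r$ smooth up to $r=0$.

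\emph{Horizontal part.} On the principal part, Lemma \ref{vectorlem} lets us write $u=h(r)\partial_r+v$ with $v$ vertical. The divergence-free condition \eqref{Div0} together with Lemma \ref{MCV} gives $h=c\,h_0$, as in \eqref{hsolution}, with $h_0>0$ on $(0,1)$.

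\emph{Boundary condition.} At $x=\gamma(0)\in\partial M$ we have $T_x\partial M=T_x(G\gamma(0))$, since the boundary orbit is open in the boundary component and has the same dimension. Condition \eqref{BCs0} therefore forces the $\partial_r$-component of $u$ to vanish at $r=0$, so $h(0)=\lim_{r\to 0}h(r)=0$. Because the boundary orbit has the same dimension as the principal orbits, $\vol_{\sg_r}(G/H)$ stays bounded and positive as $r\to0$. Its isotropy may be a larger finite extension of $H$, but the volume of the orbit as a submanifold is still continuous and positive. Hence $h_0(0)\neq 0$, so $c=0$ and $h\equiv 0$ on $(0,1)$.

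\emph{Other endpoint and conclusion.} At the other end $r=1$, $h=0$ holds by continuity; this is consistent whether $r=1$ is a singular orbit, an exceptional orbit, or another boundary orbit, so no further case analysis is needed beyond noting $c=0$. By continuity, $u$ is then vertical on all of $M$.

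The main obstacle is the first step: rigorously justifying that $\partial M$ consists of codimension-one orbits at which the orbit volume stays positive, so that $h_0(0)\neq0$. I would handle this with the slice theorem at a boundary point, where the slice is a half-line $[0,\epsilon)$ on which the isotropy must act trivially, since it cannot flip a half-line. This makes the boundary orbit principal, and $\sg_r$ then extends smoothly to $r=0$ with nondegenerate volume.
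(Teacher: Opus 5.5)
Your proposal is correct and follows essentially the same route as the paper. Both arguments place a boundary orbit at the endpoint $r=0$ of $M/G\cong[0,1]$, reduce divergence-freeness to the horizontal ODE \eqref{Div0}, use \eqref{BCs0} to force $h(0)=0$, and conclude $h\equiv 0$. Your half-line slice argument, showing the boundary orbit is principal with positive volume so that $h_0(0)\neq 0$, spells out what the paper uses implicitly when it passes from $h(0)=0$ to $h\equiv 0$.
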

 \begin{proof}
     By definition, $M/G$ is compact and one-dimensional, and we also find that the projection $\pi: M\to M/G$ is a Riemannian submersion. Thus, $\pi (\partial M)\subseteq \partial (M/G)$, which implies that $M/G$ must be diffeomorphic to $[0,1]$. Without loss of generality, suppose that $x\in \partial M$ and $\pi(x)=0$, which implies that $\pi^{-1}(0)\subseteq \partial M$. Let $\gamma:[0,R]\to M$ be a maximal geodesic with $\gamma(0)=x$, and $\gamma'(0)$ perpendicular to $G x$. By \cite[Proposition 3.78]{AlexandrinoBettiol15}, this geodesic intersects each orbit it meets orthogonally. Furthermore, the geodesic intersects all orbits at least once. Therefore, it suffices to prove that $u$ is vertical along the geodesic $\gamma$. Like in the empty boundary case, the divergence free condition gives the ODE  \eqref{Div0} for the horizontal component of the vector field. The boundary condition \eqref{BCs0} then implies that $h(0)=0$, so that $h$ must vanish uniformly.
 \end{proof}
Thus, the equations decouple again. The proof then proceeds in an almost identical fashion to that of Theorem \ref{maintheorem}.
\begin{thm}
   Suppose the compact Lie group $G$ acts with cohomogeneity one and isometrically on the compact Riemannian manifold $(M,\mathsf{g})$ whose boundary includes a principal orbit. For any smooth, divergence-free and $G$-invariant vector field $u_0\in \fX(M)$, there exists a smooth $u:\mathbb{R}\to \fX(M)$ and $p:\mathbb{R}\to C^{\infty}(M)$ satisfying \eqref{Euler}, \eqref{BCs0} and the initial condition $u(0)=u_0$. 
\end{thm}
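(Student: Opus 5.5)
We reduce to the argument already given for the case $M/G=[0,1]$ in Proposition \ref{noblow-upInterval}, using the preceding lemma to kill the horizontal component. First, Theorem \ref{Eulerwellposed} (which allows boundary) gives a unique smooth local solution $(u,p)$ on a maximal interval $[0,T^*)$ satisfying \eqref{Euler}--\eqref{BCs0}. Theorem \ref{sympreserve} shows that $u(t)$ and $p(t)$ stay $G$-invariant. Since $u(t)$ is also divergence-free and tangent to $\partial M$, the preceding lemma shows that $u(t)$ is vertical for every $t$, i.e.\ $h\equiv 0$ in the decomposition \eqref{vectorform}. By time-reversibility it suffices to show $T^*=\infty$. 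For that we fix $T>0$, assume the solution exists on $[0,T)$, and prove $\sup_{t\in[0,T)}\|u(t)\|_{C^1(M,\sg)}<\infty$.

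With $h\equiv0$, the system \eqref{EulerCH1} decouples. The pressure satisfies $p'=-\sg_r(S_rv,v)$, and the velocity satisfies $\frac{dv}{dt}=-\nabla^r_vv$ orbitwise. The argument of Lemma \ref{intervalC0} then gives $\|u(t,x)\|=\|u_0(x)\|$ at every $x$. This holds at principal orbits directly, including the boundary orbits: $M/G\simeq[0,1]$, the boundary consists of principal orbits over the endpoints, and the computation is valid up to $r=0$. At a nonprincipal endpoint, if there is one, it follows by continuity.

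For $C^1$ bounds we cover $M$ by finitely many invariant neighbourhoods, using compactness of $M$ up to the boundary. Near a principal orbit $G\gamma(r_0)$, including a boundary one where $r_0$ is an endpoint and we use one-sided neighbourhoods $[0,\epsilon)$, we use the frame $Y_i$ of Lemma \ref{vectorlem}. The frame exists up to the boundary because $\sg_r$ is smooth on a closed interval there. We write $v=\sum v_iY_i$, so that $\frac{dv_k}{dt}=\sum d^k_{ij}v_iv_j$ holds pointwise in $r$. We then run the argument from the proof of Proposition \ref{noblow-upInterval}: expand $v_k(r)=v_k(r_0)+(r-r_0)\tilde v_k(r)$, use the $C^0$ bound on $v_k(r_0)$, and shrink the neighbourhood so that the quadratic term $(r-r_0)\sum d^k_{ij}\tilde v_i\tilde v_j$ cannot cause blow-up before time $T$. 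This bounds $\tilde v_k$ in $C^0$, and differentiating in $r$ gives $C^1$ bounds. If one end of $M/G$ is a nonprincipal orbit rather than boundary, its neighbourhood is handled verbatim by Lemma \ref{u1bounds} and the Taylor-coefficient induction in the proof of Proposition \ref{noblow-upInterval}, since that analysis is purely local. The blow-up criterion of Theorem \ref{Eulerwellposed} then yields existence on $[0,\infty)$, and time reversal gives $\mathbb{R}$. The pressure is recovered as $p(t,r)=-\int_0^r\sg_s(S_sv,v)\,ds$, which is smooth and $G$-invariant.

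The main technical point is the boundary orbit itself. We must check that the frame $Y_i$, the functions $d^k_{ij}$, and the shrinking-neighbourhood argument all work on half-open intervals $[r_0,r_0+\epsilon)$. This needs smoothness of the metric data up to $r_0$, which holds because $M$ is a smooth manifold with boundary and the normal geodesic $\gamma$ extends smoothly to the endpoint. The rest is a transcription of the closed case.
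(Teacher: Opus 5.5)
Your proposal is correct and takes essentially the same route as the paper. The preceding lemma forces $h\equiv 0$, so the equations decouple orbitwise, and any nonprincipal end is handled by the local analysis of Proposition~\ref{noblow-upInterval}. At the boundary (principal) orbits you write $u=\sum v_i(t,r)Y_i$ in a local invariant frame, bound the $v_i$ in $C^1$ uniformly on $[0,T)$, and conclude with the blow-up criterion of Theorem~\ref{Eulerwellposed}, exactly as the paper's sketch does.
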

\begin{proof}
As in the proof of Proposition \ref{noblow-upInterval}, it suffices to choose an arbitrary $T>0$, and show that if $u: [0,T)\to \fX(M)$ and $p:[0,T)\to C^{\infty}(\mathbb{R})$ is a solution, then each $x\in M$ is contained in an open neighbourhood $U$ on which $\sup_{t\in [0,T)} \| u(t)\|_{C^1(U,\mathsf{g})}<\infty$. In fact, given the work of Proposition \ref{noblow-upInterval}, it suffices to do this in the case $x\in \partial M$, but the argument proceeds identically. Indeed, since $x$ is on a principal orbit, we can find a finite number of smooth and $G$-invariant vector fields $\{Y_i\}_{i=1}^{M}$ defined locally, so that $u=\sum_{i=1}^{M}v_i(t,r)Y_i$  for some smooth and scalar functions of $r$. It is possible to show that the scalar functions $v_i$ are bounded in $C^1$, independently of $t$, and the required $C^1$ estimates on $u$ follow. 
\end{proof}

\clearpage

\end{document}